\newcommand{\wiener}{\mathcal{W}}
\newcommand{\rkhs}{\mathcal{H}}
\newcommand{\prob}{\mathbb{P}}
\newcommand{\reals}{\mathbb{R}}
\newcommand{\E}{\mathbb{E}}
\newcommand{\half}{\frac{1}{2}}
\newcommand{\tB}{\tilde{B}}
\newcommand{\tmu}{\tilde{\mu}}
\newcommand{\tX}{\widetilde{X}}
\newcommand{\borel}{\mathcal{B}}
\newcommand{\norm}[1]{\left\|{#1}\right\|}
\newtheorem{theorem}{Theorem}[section]
\theoremstyle{plain}
\newtheorem{case}{Case}
\newtheorem{corollary}[theorem]{Corollary}
\newtheorem{lemma}[theorem]{Lemma}
\newtheorem{proposition}[theorem]{Proposition}
\newtheorem{remark}[theorem]{Remark}
\numberwithin{equation}{section}
\begin{document}
\title[Smoothness of Density for Area of fBm]{Smoothness of Density for the Area Process of Fractional Brownian Motion}
\author{Patrick Driscoll}

\begin{abstract}
We consider a process given by a two-dimensional fractional Brownian
motion with Hurst parameter $\frac{1}{3}<H<\frac{1}{2}$, along with an
associated L\'{e}vy area, and prove the smoothness of a density for this
process with respect to Lebesgue measure.

\end{abstract}
\maketitle


\section{Introduction}

Let $B_{t} := (B^{1}_{t},B^{2}_{t}),\,t\in[0,T]$ be two-dimensional fractional
Brownian motion of Hurst parameter $\frac{1}{3} < H < \frac{1}{2}$,
and let 
\[
(B_{m})_{t} := \left(  (B^{1}_{m})_{t}, (B^{2}_{m})_{t}\right)
\]
denote the $m$-th dyadic approximation of $B$ (as defined below in Section~\ref{dyadicapproximation}). Define the area processes
\begin{equation}\label{e.am}
(A_{m})_{t} := \frac{1}{2} \left[  \int_{0}^{t} (B^{1}_{s})_{m} d(B^{2}%
_{s})_{m} - \int_{0}^{t} (B^{2}_{s})_{m} d(B^{1}_{s})_{m}\right]  ,
\end{equation}
and $A_{t} := \lim\limits_{m\rightarrow\infty} (A_{m})_{t}$ (where this
convergence is almost sure - see Theorem 2 of \cite{coutin_qian}).

The main result of this paper is as follows:

\begin{theorem}
\label{mainresult}Define the random process $\{Y\}_{0\leq t \leq T}$, taking
values in $\mathbb{R}^{3}$, by
\begin{align*}
Y_{0}  &  =0,\\
Y_{t}  &  =(B_{t},A_{t}).\qquad(t\in(0,T])
\end{align*}
Then for all $t \in [0,T]$, the density of $Y_t$ with respect to Lebesgue measure is $C^{\infty}$.

\end{theorem}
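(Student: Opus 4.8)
The plan is to apply the standard Malliavin-calculus criterion for smoothness of densities: if the $\reals^{3}$-valued random vector $Y_{t}$ belongs componentwise to $\mathbb{D}^{\infty}$ (the space of functionals that are infinitely Malliavin differentiable with derivatives of all orders in every $L^{p}$), and if the associated Malliavin covariance matrix $\gamma_{Y_{t}} = \left( \langle DY^{i}_{t}, DY^{j}_{t}\rangle_{\rkhs}\right)_{1\le i,j\le 3}$ is invertible with $\E\!\left[(\det\gamma_{Y_{t}})^{-p}\right]<\infty$ for every $p\ge 1$, then the law of $Y_{t}$ has a $C^{\infty}$ density with respect to Lebesgue measure. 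Here $\rkhs$ denotes the Cameron--Martin space of the two-dimensional fBm. The whole proof thus reduces to two tasks: the Malliavin regularity of $Y_{t}$, and the negative-moment bound for $\det\gamma_{Y_{t}}$. The instant $t=0$ is degenerate (a point mass), so throughout I take $t\in(0,T]$.

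For the regularity I would work through the dyadic approximations of Section~\ref{dyadicapproximation}. Each $(B_{m})_{t}$ is a finite linear combination of Gaussian increments, hence lies in the first chaos, while each $(A_{m})_{t}$ is a quadratic (second-chaos) functional of these increments; both are smooth cylindrical functionals in $\mathbb{D}^{\infty}$, with derivatives computable by the Leibniz rule applied to \eqref{e.am}. The task is then to upgrade the almost-sure convergence $(A_{m})_{t}\to A_{t}$ (Theorem~2 of \cite{coutin_qian}) to convergence in $\mathbb{D}^{\infty}$. Since $A_{t}$ lives in a fixed chaos, hypercontractivity makes all $L^{p}$ norms of the approximants comparable to their $L^{2}$ norms, so it suffices to obtain uniform-in-$m$ bounds on the Sobolev norms $\norm{(A_{m})_{t}}_{k,2}$, which follow from the Coutin--Qian estimates on the iterated integrals. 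Closedness of the operators $D^{k}$ then yields $A_{t}\in\mathbb{D}^{\infty}$ together with $DA_{t}=\lim_{m}D(A_{m})_{t}$.

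The heart of the matter, and the step I expect to be the main obstacle, is the negative-moment bound for $\det\gamma_{Y_{t}}$. First I would record the structure of the matrix: since $B^{1}$ and $B^{2}$ live in orthogonal copies of $\rkhs$, the top-left $2\times 2$ block is the deterministic nondegenerate matrix $t^{2H}I_{2}$, so the only possible degeneracy comes from the area direction. Writing $\lambda_{\min}$ for the smallest eigenvalue of $\gamma_{Y_{t}}$, it therefore suffices to bound below, in every $L^{p}$, the quantity $\inf_{|v|=1}\norm{v_{1}DB^{1}_{t}+v_{2}DB^{2}_{t}+v_{3}DA_{t}}_{\rkhs}^{2}$; that is, to show that the projection of $DA_{t}$ onto the orthogonal complement of $\mathrm{span}\{DB^{1}_{t},DB^{2}_{t}\}$ is nondegenerate. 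Using the self-similarity $(B_{t},A_{t})\stackrel{d}{=}(t^{H}\widetilde{B}_{1},t^{2H}\widetilde{A}_{1})$, the determinant scales by a deterministic power of $t$, so it is enough to treat $t=1$.

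To produce the lower bound at $t=1$ I would exploit the local nondeterminism of fractional Brownian motion: conditioning on the path outside a short interval leaves a genuinely nondegenerate Gaussian residual, and this residual forces the orthogonal component of $DA_{t}$ to be bounded below. Quantitatively, the plan is to combine a small-ball lower estimate for the conditional variance with the Cameron--Martin norm comparisons available for $H\in(\third,\half)$, in the spirit of non-degeneracy arguments for Gaussian rough paths. The real difficulty is that in this low-regularity regime the Cameron--Martin norm is delicate and the iterated-integral structure of $DA_{t}$ couples the two coordinates, so the conditional-variance lower bound must be made uniform and then integrated against the tail of $\lambda_{\min}$ to extract every negative moment. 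Once $\E[\lambda_{\min}^{-p}]<\infty$ is established for all $p$, the elementary bound $\det\gamma_{Y_{t}}\ge\lambda_{\min}^{3}$ for the $3\times 3$ symmetric matrix $\gamma_{Y_{t}}$ gives $\E[(\det\gamma_{Y_{t}})^{-p}]<\infty$ for all $p$, scaling transfers this from $t=1$ to every $t\in(0,T]$, and Malliavin's criterion then delivers the $C^{\infty}$ density.
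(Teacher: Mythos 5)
Your first half (the Malliavin criterion, dyadic approximation, and the second-chaos/hypercontractivity argument for $Y_t\in\mathbb{D}^{\infty}$) follows essentially the paper's route and is sound in outline. The genuine gap is in what you yourself call the heart of the matter: the negative moments of $\det\gamma_{Y_t}$. There you do not give an argument; you describe a program (local nondeterminism, small-ball lower bounds on conditional variance, uniform Cameron--Martin comparisons) and concede that making it work for $H\in(\third,\half)$ is ``the real difficulty.'' That program is the heavy machinery one would need for general Gaussian rough differential equations, and nothing in your proposal actually carries it out or explains why it closes in this low-regularity regime; as written, the proof stops exactly at the step the theorem needs.

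What you are missing is a structural observation that makes all of that machinery unnecessary for this particular functional. The paper computes $DY_T$ explicitly: $DB^i_T$ is represented by the \emph{deterministic} element $R(T,\cdot)e_i\in\mathcal{H}^2$, and $DA_T=\langle QB,\cdot\rangle_{\mathcal{H}^2}$, where $Q$ is a fixed bounded linear operator from the path space into $\mathcal{H}^2$ (Propositions~\ref{p.qact}, \ref{Qprops}, \ref{p.diff}). Consequently
\[
\gamma=\begin{pmatrix}T^{2H}I_2 & QB(T)\\ \left(QB(T)\right)^{tr} & \Vert QB\Vert^2_{\mathcal{H}^2}\end{pmatrix},
\qquad
\det\gamma \;=\; T^{4H}\Vert QB\Vert^2_{\mathcal{H}^2}-T^{2H}|QB(T)|^2,
\]
so the determinant is itself an explicit \emph{nonnegative quadratic form in the Gaussian path $B$} (Proposition~\ref{p.mal} via Lemma~\ref{blockdet}): Cauchy--Schwarz against $R(T,\cdot)$ gives nonnegativity, and Lemma~\ref{detkernel} shows the nullspace is trivial, so the associated trace-class operator has infinitely many positive eigenvalues. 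For such a form the eigen-expansion $\det\gamma=\sum_n\lambda_n\xi_n^2$ yields $\E[e^{-s\det\gamma}]\le C_N s^{-K_N/2}$ for every $N$, and Lemma~\ref{l.expint} converts this Laplace-transform decay into $(\det\gamma)^{-1}\in L^{\infty-}$ (Theorem~\ref{t.melcher}). No conditional-variance or small-ball analysis is required, and no self-similarity reduction to $t=1$ is needed either. In short: your criterion and regularity step match the paper, but your nondegeneracy step is a plan rather than a proof; the fix is to recognize that $\det\gamma$ is a Gaussian quadratic form with trivial kernel and invoke the quadratic-form theorem.
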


The investigation of this process is motivated by the potential for fractional
Brownian motion to be a useful driving signal in stochastic differential
equations that model a wide variety of natural and financial phenomena; in
particular, the presence of long-range persistence (for $H > 1/2$) or
anti-persistence (for $H < 1/2$) makes fBm a natural candidate for a driving
process in many scenarios. Several examples of such applications are included
in \cite{shiryaev} and \cite{norros}.

One area of interest in the study of stochastic differential
equations is on finding sufficient conditions for existence and regularity of
densities for solutions. More specifically, given some solution $\{Y_{t}\}$ to
the equation
\begin{equation}\label{stochasticequation}
dY_{t} = \sum\limits_{i=1}^{d} X_{i}(Y_{t})d\xi_{t},
\end{equation}
where $\{X_{i}\}$ is some collection of vector fields and $\xi_{t}$ is a
Gaussian driving process on the space $\mathcal{C}([0,T],\mathbb{R}^{d})$, it
is natural ask whether $\xi$ admits a density with respect to Lebesgue
measure. It is due to the celebrated theorem of H\"ormander (see, for example,
Theorem 38.16 of \cite{rogerswilliams2}) that, when our driving process is
standard Brownian motion, our solution admits a smooth density so long as the
set of vectors $\{X_{i}, [X_{i},X_{j}], [[X_{i},X_{j}],X_{k}]\ldots\}$ spans
$\mathbb{R}^{d}$.  

In the case of fractional Brownian motion, one may no longer appeal to the types of martingale arguments used in proofs of the above result in the standard case.  When $H > \half$, the positive correlation of increments of sample paths results in better variational properties than those of Brownian motion, and so one may use Young's integration theory to attack the problem -- existence of a density to a solution of \eqref{stochasticequation} under this condition is proven in \cite{nualart_saussereau}, and smoothness is proven in \cite{baudoin_hairer}.  When $H < \half$, one must turn to the rough path theory of T. Lyons (see \cite{lyons}) in order to interpret \eqref{stochasticequation} in a meaningful manner.  The connection between fractional Brownian motion with $\frac{1}{4} < H < \half$ and rough paths is investigated in \cite{coutin_qian}, and existence of a density in the case of $\frac{1}{3} < H \half$ is proven in \cite{cass_friz_victoir}.  As far as we are aware, Theorem~\ref{mainresult} is the first positive result involving smoothness, and may give hope that similar results will hold in a more general setting.

In order to prove Theorem~\ref{mainresult}, we appeal to the usual technique of Malliavin calculus.  It follows from
Theorem 5.1 of \cite{malliavin} that it is enough to show that $Y$ satisfies the following two conditions

\begin{enumerate}
\item[1.] $Y \in\mathbb{D}^{\infty}$;

\item[2.] If $\gamma=DY(DY)^{\ast}$ is the Malliavin covariance matrix
associated to $Y$, then
\[
(\det\gamma)^{-1}\in L^{\infty-}(\mathcal{W}^{2},\mathbb{P}):=\bigcap
\limits_{j\geq1}L^{j}(\mathcal{W}^{2},\mathbb{P}).
\]

\end{enumerate}

We will begin by calculating the derivative $DY$ explicitly, and from this Condition 1 will be proven in Proposition~\ref{p.smooth}.  Condition 2 will then follow by direct analysis of the Malliavin covariance matrix; see Proposition~\ref{p.mal} and Corollary~\ref{c.int}.
\section{Background}

\subsection{Fractional Brownian Motion}

A (one-dimensional) fractional Brownian motion $\{B^{H}_{t};\; (t \in[0,T])\}$
of Hurst parameter $H \in[0,1]$ is a continuous-time centered Gaussian process
with covariance given by
\[
\mathbb{E}(B^{H}_{s}B^{H}_{t}) = R(s,t) := \frac{1}{2}\left(  s^{2H} + t^{2H}
- \left|  t-s\right|  ^{2H}\right)
\]
Our focus will be on fBm with $1/3 < H < 1/2$; henceforth, we shall assume
that such an $H$ has been fixed and will drop the parameter from our notation
whenever possible to do so without causing confusion.

An $n$-dimensional fractional Brownian motion is a stochastic process
$\{B_{t} = (B^{1}_{t},\ldots,B^{n}_{t});\;t\in[0,T]\}$ is a
continuous-time process comprised of $n$ independent copies of one-dimensional
fractional Brownian motion, all having the same Hurst parameter $H$.

It is straightforward to check that the process $B$ satisfies a self-similarity property; that is to say, the processes $B_{at}$ and $a^{-H}B_t$ are equal in distribution.  
By Kolmogorov's continuity criterion, the sample paths $t \mapsto B_t$ are almost
surely H\"older continuous of order $\alpha$, for any $\alpha< H$ -- see Theorem 1.6.1 of \cite{biagini} for details of this proof.

\subsection{Dyadic Approximation}\label{dyadicapproximation}

For each $m$, we will let 

\noindent $D_{m}:=\{k2^{-m}T;k=0,1,\ldots,2^{m}\}$. We define
the $m$-th dyadic approximator $\pi_{m}:\mathcal{C}([0,T],\mathbb{R}%
^{d})\longrightarrow\mathcal{C}([0,T],\mathbb{R}^{d})$ as the unique projection operator
such that, for any given $f\in\mathcal{C}([0,T],\mathbb{R}^{d})$,
\begin{align*}
\pi_{m}f(t)  =f(t),&\quad(t\in D_{m})\\
\frac{d^{2}}{dt^{2}}\pi_{m}f(t)  =0.&\quad(t\notin D_{m})
\end{align*}
In words, $\pi_{m}f$ is nothing more than the piecewise linear path agreeing with $f$ on
the set $D_{m}$. We will regularly use the shorthand notation $f_{m}:=\pi
_{m}f$ where convenient. Similarly, we will define the $m$-th dyadic approximation of
fractional Brownian motion $B_{m}:=\pi_{m}B$; more explicitly,
\[
(B_{m})_{t}:=B_{t_{-}}+(t-t_{-})2^{m}[B_{t_{+}}-B_{t_{-}}],\quad(0\leq
t\leq T)
\]
where $t_{-}$ is the largest member of $D_{m}$ such that $t_{-}\leq t$ and
$t_{+}$ is the smallet member of $D_{m}$ such that $t\leq t_{+}.$

\subsection{$p$-variation and Rough Paths}

Let $U$ be a Banach space ($U$ will typically be $\mathbb{R}$ or
$\mathbb{R}^{2})$ with norm $\Vert\cdot\Vert_{U}$, and $\mathcal{P}[0,T]$
denote the set of finite partitions of $[0,T]$. Suppose we are given a path
$f\in\mathcal{C}([0,T],U)$; then for each $1\leq p<\infty$ and $\Pi
=\{0=t_{1},t_{2},\ldots,t_{N}=1\}\in\mathcal{P}[0,T]$, one may define the
quantities
\begin{align*}
\Delta_{i}f  &  :=f(t_{i})-f(t_{i-1}),\\
V_{p}(f:\Pi)  &  :=\left(  \sum\limits_{i=1}^{N}\Vert\Delta_{i}f\Vert_{U}%
^{p}\right)  ^{\frac{1}{p}},\\
\Vert f\Vert_{p}  &  :=\sup\limits_{\Pi\in\mathcal{P}[0,T]}V_{p}(f:\Pi).
\end{align*}
The norm $\Vert\cdot\Vert_{p}$ is referred to as the \textit{p-variation
norm}; we shall define the space $\mathcal{C}_{p}(U):=\{f\in\mathcal{C}%
([0,T],U);\;\Vert f\Vert_{p}<\infty\}$, which is a Banach space under
$\Vert\cdot\Vert_{p}$. It is easy to check that for given $\alpha$ and $p$
such that $\alpha<\frac{1}{p}$, any $\alpha$-H\"{o}lder continuous function is
in $\mathcal{C}_{p}(U)$. Also, one has that for any $1\leq p<q$,
$\mathcal{C}_{p}\left(  {U}\right)  \subset\mathcal{C}_{q}(U)$.

Given $f\in\mathcal{C}_{p}(U),g\in\mathcal{C}_{q}(U)$, where $p$ and $q$ are
such that $\frac{1}{p}+\frac{1}{q}>1$, one can develop the notion of
integration of $f$ against $g$ in the following manner: if $\{\Pi_{n}:= \{t_i\}%
\}\subset\mathcal{P}[0,T]$ is a collection of partitions such that the mesh
size $|\Pi_{n}|$ tends to 0 as $n\rightarrow\infty$, we define
\[
\int_{0}^{T}f\;dg:=\lim\limits_{n\rightarrow\infty}\sum\limits_{i=1}%
^{\#(\Pi_{n})}f(c_{i})\Delta_{i}g
\]
where $c_{i}\in(t_{i-1},t_{i})$.  This limit is guaranteed to exist under the assumptions presented, and is independent of the choice we make of the family of partitions so long as their mesh size tends to zero - see Theorem 3.3.1 of \cite{lyons_qian_2002} for further details. The element $\int_{0}^{1}f\;dg$ is referred
to as \textit{the Young's integral of f against g}.  This expression was originally formulated in \cite{young}.  We have the following
estimate on the value of this expression (see Formula 10.9 of \cite{young})
\begin{equation}\label{e.yintbound}
\left\vert \int_{0}^{T}f\;dg-\left[  f(0)\cdot\left(  g(T)-g(0)\right)
\right]  \right\vert \leq C\Vert f\Vert_{p}\Vert g\Vert_{q},
\end{equation}
where the constant $C$ depends only on the values of $p$ and $q$. 

Similarly,
given some $f\in\mathcal{C}([0,T]^{2},U)$, and partitions $\Pi_{1}%
=\{s_{i}\}$,

\noindent $\Pi_{2}=\{t_{j}\}\in\mathcal{P}[0,T]$, we may define
\begin{align*}
\Delta_{ij}f  &  :=f(s_{i},t_{j})-f(s_{i},t_{j-1})-f(s_{i-1},t_{j}%
)+f(s_{i-1},t_{j-1}),\\
V_{p}(f:\Pi_{1},\Pi_{2})  &  :=\left(  \sum\limits_{i=1}^{\#(\Pi_{1})}%
\sum\limits_{j=1}^{\#(\Pi_{2})}\Vert\Delta_{ij}f\Vert_{U}^{p}\right)  ^{\frac{1}{p}%
},\\
\Vert f\Vert_p^{(2D)}  &  :=\sup\limits_{\Pi_{1},\Pi_{2}\in\mathcal{P}%
[0,T]}V_{p}(f:\Pi_{1},\Pi_{2}).
\end{align*}
As in the (one-dimensional) case above, we shall
define the space 

\noindent $\mathcal{C}^{(2D)}_{p}(U):=\{f\in\mathcal{C}([0,T]^{2}%
,U);\;\Vert f\Vert_p^{(2D)}<\infty\}$. It is helpful to note that, for each $f\in\mathcal{C}^{(2D)}_{p}(U)$ such that $f(0,\cdot)=0$, then for each
fixed $s\in\lbrack0,T]$, $f(s,\cdot)\in\mathcal{C}_{p}(U)$ and $\|f(s,\cdot)\|_p \leq \|f\|^{(2D)}_p$, since
\begin{align*}
\sum\limits_{i=1}^{\#(\Pi)}|\Delta_{i}f(s,\cdot)|^{p}  &  =\sum_{i=1}^{\#(\Pi)}|\Delta_{i}f(s,\cdot)-\Delta_{i}f(0,\cdot)|^{p}\\
&\leq\sum_{i=1}^{\#(\Pi)}\big(|\Delta_{i}f(s,\cdot)-\Delta
_{i}f(0,\cdot)|^{p}\\
&  \qquad\quad+|\Delta_{i}f(T,\cdot)-\Delta_{i}f(s,\cdot)|^{p}\big)\leq\left(\Vert
f\Vert_p^{(2D)}\right)^{p}.
\end{align*}

Trivially, one also has that for each $f\in\mathcal{C}_p(U)$, the function

\noindent $f\otimes f: [0,T]^2 \rightarrow U$ defined by
\[
\left(f\otimes f\right) (s,t) := f(s)f(t)
\]
is contained in $\mathcal{C}_p^{(2D)}(U)$, and $\|f\otimes f\|^{(2D)}_p \leq \|f\|^2_p$.

Given $f\in\mathcal{C}^{(2D)}_{p}(U)$ and $g\in\mathcal{C}^{(2D)}_{q}(U)$, where $p$ and $q$
are such that $\frac{1}{p}+\frac{1}{q}>1$, the \textit{2D-Young's integral of
f against g}, denoted by $\int_{[0,T]^{2}}f\;dg$, is defined by
\[
\int_{\lbrack0,T]^{2}}f\;dg:=\lim\limits_{n\rightarrow\infty}\sum
\limits_{i=1}^{\#(\Pi_{n})}\sum
\limits_{j=1}^{\#(\Psi_{n})}f(c_{i},d_{j})\Delta_{ij}g,
\]
where, as before, $\{\Pi_{n} := \{t_i\} \}, \{\Psi_{n} := \{s_j\} \} \subset\mathcal{P}[0,T]$ are collections of
partitions such that the maximum mesh size $|\Pi_{n}| \vee |\Psi_n|$ tends to 0 as $n\rightarrow
\infty$ and $c_{i}\in(t_{i-1},t_{i})$, $d_j\in(s_{j-1},s_j)$.  Existence of this limit under the given assumptions, independent of the choice of the family of partitions, is proven in Theorem 1.2 of \cite{towghi}, as is an estimate similar to that
of the one-dimensional case:
\[
\left\vert \int_{\lbrack0,T]^{2}}f\;dg\right\vert \leq C\Vert g\Vert
^{(2D)}_{q}\Big(\Vert f\Vert_p^{(2D)}+\Vert f(0,\cdot)\Vert_{p}+\Vert f(\cdot
,0)\Vert_{p}+|f(0,0)|\Big).
\]

It will be helpful to record here a pair of results relating the variation of paths with their linear approximations.
\begin{theorem}[Propositions 5.20 and 5.60 of \cite{Friz-Victoir2010}]\label{t.uniformpvar}
\mbox{}
\begin{enumerate}
\item Suppose $x \in \mathcal{C}_p(U)$, and let $x_m := \pi_m x$ be the dyadic approximation to $x$ as defined above.  Then one has that
\[
\|x_m\|_p \leq 3^{p-1}\|x\|_p.
\] 
\item Suppose $x \in \mathcal{C}^{(2D)}_p(U)$, and let $x_m := \pi_m x$ be the dyadic approximation to $x$ as defined above.  Then one has that
\[
\|x_m\|^{(2D)}_p \leq 9^{p-1}\|x\|^{(2D)}_p.
\] 
\end{enumerate}
\end{theorem}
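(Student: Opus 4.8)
The plan is to establish the one-dimensional estimate (1) by bounding $V_p(x_m:\Pi)$ for an arbitrary partition $\Pi = \{0 = u_0 < u_1 < \cdots < u_N = T\}$ and then taking the supremum; the two-dimensional estimate (2) will follow by running the same argument on the product grid. First I would fix $\Pi$ and analyze a single increment of $x_m = \pi_m x$. If $u_{i-1}$ lies in the dyadic cell $[t_a, t_{a+1}]$ and $u_i$ lies in $[t_b, t_{b+1}]$ (with $t_a \le t_b$), then, since $x_m$ coincides with $x$ on $D_m$ and is linear on each cell, I can write
\[
x_m(u_i) - x_m(u_{i-1}) = \big[x_m(u_i) - x(t_b)\big] + \big[x(t_b) - x(t_{a+1})\big] + \big[x(t_{a+1}) - x_m(u_{i-1})\big].
\]
This exhibits the increment as a sum of at most three vectors: a \emph{partial} increment inside the last cell, a genuine increment of $x$ across a block of whole cells, and a partial increment inside the first cell. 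By linearity the two partial increments are the fixed fractions $\lambda_i$ and $1 - \lambda_{i-1}$ of the full dyadic increments of $x$ over $[t_b,t_{b+1}]$ and $[t_a,t_{a+1}]$, where $\lambda_i, \lambda_{i-1} \in [0,1]$ record the positions of $u_i, u_{i-1}$ within their cells.

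The constant $3^{p-1}$ then emerges from the elementary convexity bound $\|v_1+v_2+v_3\|^p \le 3^{p-1}(\|v_1\|^p+\|v_2\|^p+\|v_3\|^p)$, applied to each increment. Summing over $i$ and regrouping the pieces according to the dyadic cell they sit in, I would show the total is at most $3^{p-1}\|x\|_p^p$. The regrouping is the heart of the matter: the middle blocks attached to different partition intervals occupy disjoint ranges of cells, while any single cell $[t_k,t_{k+1}]$ meeting one or more partition points collects, over the intervals touching it, a family of nonnegative weights (the relevant $\lambda$'s and their successive differences) that sum to at most $1$. Using $s^p \le s$ for $s \in [0,1]$ and $p \ge 1$, each such cell therefore contributes at most $\|x(t_{k+1})-x(t_k)\|^p$. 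Since the cells and middle blocks together form the increments of $x$ over a single partition of $[0,T]$ by dyadic points, the bracketed sum is at most $\|x\|_p^p$ by the definition of the $p$-variation norm. Taking $p$-th roots and the supremum over $\Pi$ gives $\|x_m\|_p \le 3^{(p-1)/p}\|x\|_p \le 3^{p-1}\|x\|_p$, as $3^{(p-1)/p}\le 3^{p-1}$ for $p\ge 1$.

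For part (2) I would carry out the same three-fold splitting in each variable separately. Each second-order rectangular increment $\Delta_{ij}x_m$ then decomposes into at most $3\times 3 = 9$ terms, every one of which is a scalar multiple of a rectangular increment of $x$ over a dyadic sub-rectangle. The convexity inequality with nine summands produces the constant $9^{p-1}$, and the regrouping is identical except that the weight attached to a given dyadic rectangle is now a product of two one-dimensional weights, hence again at most $1$. Exactly as before this yields $\|x_m\|_p^{(2D)} \le 9^{(p-1)/p}\|x\|_p^{(2D)} \le 9^{p-1}\|x\|_p^{(2D)}$.

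The step I expect to be the main obstacle is precisely this accounting of overlaps. A cruder argument that simply bounds each partial increment by the full increment of the cell containing it double-counts every boundary cell (it is shared by two consecutive partition intervals) and so only delivers a constant of order $3\cdot 3^{p-1}$, too large to match the stated bound for $p<2$. Retaining the partial increments as honest fractions $\lambda$ and $1-\lambda$ of the cell increment, and exploiting $\lambda^p+(1-\lambda)^p \le 1$, is what collapses the weight of each shared cell back to $1$ and recovers the sharp constant; the boundary cases in which a partition point coincides with a dyadic point or with $0$ or $T$ are absorbed by letting the corresponding weight degenerate to $0$ or $1$.
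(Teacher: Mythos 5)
The paper offers no proof of this theorem at all---it is imported by citation to Propositions 5.20 and 5.60 of \cite{Friz-Victoir2010}---and your argument is essentially the proof given in that reference: the three-part splitting of each increment of $\pi_m x$ (partial cell, block of whole cells, partial cell), the convexity inequality, and the bookkeeping $\lambda^p+(1-\lambda)^p\le 1$ within each dyadic cell, with product weights on dyadic rectangles in the two-dimensional case. Your proof is correct, and in fact yields the sharper constants $3^{(p-1)/p}$ and $9^{(p-1)/p}$ (the constants $3^{1-1/p}$ and $9^{1-1/p}$ stated in \cite{Friz-Victoir2010}), which imply the bounds quoted in the paper since $3^{(p-1)/p}\le 3^{p-1}$ and $9^{(p-1)/p}\le 9^{p-1}$ for $p\ge 1$.
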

For further development of the theory, the interested reader may look in \cite{young},
\cite{friz_victoir}, and \cite{towghi}.

\subsection{Gaussian measure spaces}\label{gaussianmeasurespace}
Let $(\wiener, \norm{\cdot})$ denote a separable Banach space.  We will say that a measure $\prob$ on $\wiener$ is Gaussian if there exists a symmetric bilinear form $q: \wiener^* \times \wiener^* \longrightarrow \reals$ such that for all $\varphi \in \wiener^*$,
\[
\int_\wiener \exp\left(i\varphi(\omega)\right)\;d\prob(\omega) = \exp\left(-\half q(\varphi,\varphi)\right).
\]
Let $\borel$ refer to the Borel $\sigma$-algebra on $\wiener$; we will call the triple $(\wiener, \borel, \prob)$ a Gaussian space.
Define a continuous mapping $J: L^2(\prob) \rightarrow \wiener$ by
\[
Jf := \int_\wiener \omega f(\omega)\;d\prob(\omega).
\]
Define $\rkhs$ as the image of $J$ restricted to the space $\overline{\wiener^*}^{L^{2}(\mathbb{P})}$; this space may be equipped with inner product given by
\[
\left\langle Jf, Jg \right\rangle_\rkhs = \left\langle f,g \right\rangle_{L^2(\prob)}.
\]
We will refer to $\mathcal{H}$ as the \textit{Cameron-Martin space} associated to the Gaussian space
$(\mathcal{W},\mathcal{B},\mathbb{P})$. More information regarding the construction of these spaces may be found in \cite{bogachev}, \cite{daprato}, and \cite{kuo}.

A well-known example of a Gaussian
measure space is the one associated to Brownian motion where
\[
\mathcal{W}:=\{\omega\in\mathcal{C}([0,T],\mathbb{R}):\omega(0)=0\}
\]
and the Gaussian measure $\mathbb{P}$ on $\mathcal{W}$ is the law of standard
Brownian motion. Details of the construction of this measure may be found in \cite{wiener_1923} and \cite{wiener_1924}. 

In this case, the Cameron-Martin space is 
\[
\mathcal{H}=\mathcal{H}_{\frac{1}{2}}:=\left\{h\in\wiener: h(s) = \int_0^s \phi(u)\;du; \phi \in  L^{2}[0,T]\right\}
\]
with inner product given by
\[
\langle h,k\rangle_{\mathcal{H}_{\frac{1}{2}}}:=\int_{0}^{t}h^{\prime
}(s)k^{\prime}(s)ds.
\]

A second example of a Gaussian measure space which is pertinent to the results described below, is as follows: 
let $\wiener$ be defined as above, and define the Gaussian measure $\prob$ on $\wiener$ as the law of fractional Brownian motion with Hurst parameter $H<1/2$; then by following Proposition 2.1.2 of \cite{biagini}, we
have that $\mathcal{H}$ consists of functions of the form

\noindent $h(t)=\int_{0}^{t} K_{H}(t,s)\hat{h}(s)\;ds$, where $\hat{h}\in L^{2}[0,T]$ and
\begin{align*}
K_{H}(t,s)  &  :=b_{H}\bigg[\left(  \frac{t}{s}\right)  ^{H-\frac{1}{2}%
}\left(  t-s\right)  ^{H-\frac{1}{2}}\\
&  \qquad-\left(  H-\frac{1}{2}\right)  s^{\frac{1}{2}-H}\int_{s}%
^{t}(u-s){H-\frac{1}{2}}u^{H-\frac{3}{2}}\;du\bigg],
\end{align*}
where $b_H$ is some suitable normalization constant.  The inner product of this space is given by
\[
\langle h,k\rangle_{\mathcal{H}}:=\langle\hat{h},\hat{k}\rangle_{L^{2}[0,T]}.
\]
For each fixed $t\in\lbrack0,T]$, the function $R(t,\cdot) = \E[B_tB_\cdot] \in\mathcal{H}$ is a \textit{reproducing
kernel} for the space; that is to say, for any $h\in\mathcal{H}$, we have the
following:
\[
\langle h,R(t,\cdot)\rangle_{\mathcal{H}}=h(t).
\]

Let $\mathcal{S}$ refer to the space of \textit{cylinder functions}; that is
to say, random variables of the form
\[
F(\omega)=f(\phi_1(\omega),\ldots,\phi_n(\omega)),
\]
where $f\in C^{\infty}(\mathbb{R}^{n})$ with all partial derivatives having at
most polynomial growth, and $\{\phi_{1},\ldots,\phi_{n}\}\subset\wiener^*$. 
Let $D:\mathcal{S}\rightarrow\mathcal{S}\otimes\mathcal{H}%
^{\ast}$ be the operator defined by the action by
\begin{align*}
DF(\omega)k&:=\frac{d}{dt}|_{t=0}F\left(  \omega+tk\right)\\
&=\sum\limits_{i=1}^{n}\partial_{i}f(\phi_1(\omega),\ldots,\phi_{n}(\omega))\phi_i(k)\quad
\left(k\in\mathcal{H}\right).\\
\end{align*}
For $1\leq q<\infty,$ we will let $\mathbb{D}^{1,q}$ denote the closure of
$\mathcal{S}$ with respect to the norm
\[
\Vert F\Vert_{1,q}:=\left(  \mathbb{E}[|F|^{q}]+\mathbb{E}[\Vert
DF\Vert_{\mathcal{H}^{\ast}}^{q}]\right)
\]
One can naturally define an iterated derivate operator $D^{k}$ taking values
in $\mathcal{H}^{\otimes k}$; from this we can define the seminorm
\[
\Vert F\Vert_{k,q}:=\left(  \mathbb{E}[|F|^{q}]+\sum\limits_{j=1}%
^{k}\mathbb{E}[\Vert D^{j}F\Vert_{\mathcal{H}^{\otimes k}}^{q}]\right)
\]
and we will denote by $\mathbb{D}^{k,q}$ the closure of $\mathcal{S}$ with
respect to $\Vert\cdot\Vert_{k,q}$. Also, let
\[
\mathbb{D}^{\infty}:=\bigcap\limits_{k\in\mathbb{N}}\bigcap\limits_{q\geq
1}\mathbb{D}^{k,q}.
\]
Given some $F\in\mathbb{D}^{1,q}$, we may define the \textit{Malliavin
covariance matrix} $\gamma$ by
\[
\gamma:=DF(DF)^{\ast}.
\]

\section{Proof of Theorem \ref{mainresult}}

Again, we fix the Hurst parameter $\frac{1}{3} < H < \frac{1}{2}$.  
Let
\[
\mathcal{W}^{2}:=\mathcal{W}\otimes\mathbb{R}^{2}\cong\mathcal{W}%
\times\mathcal{W}=\{\omega\in\mathcal{C}([0,T],\mathbb{R}^{2}):\omega
(0)=\mathbf{0}\}.
\]
On $\mathcal{W}^{2}$, one may construct a unique Gaussian measure
$\mathbb{P}$ such that the coordinate process $\{B_{t}\}_{0\leq
t\leq T}$ defined by
\[
B_t(\omega) = \omega(t)
\]
is a two-dimensional fractional Brownian motion with Hurst parameter
$H$ and $\mathbb{P}=\text{Law}(B)$. Our reproducing kernel Hilbert space in this
instance is given as $\mathcal{H}^{2}=\mathcal{H}\otimes\mathbb{R}^{2}$, where
$\mathcal{H}$ is the Cameron-Martin space for one-dimensional fractional Brownian motion; for
a general element $h=(h^{1},h^{2})\in\mathcal{H}^{2}$, the norm is given by
$\Vert h\Vert_{\mathcal{H}^{2}}^{2}=\Vert h^{1}\Vert_{\mathcal{H}}^{2}+\Vert
h^{2}\Vert_{\mathcal{H}}^{2}$. 

We shall fix
\begin{equation}\label{e.pvalue}
p\in\left(\frac{1}{H},\frac{1}{1-2H}\right),
\end{equation}
and define (following Section 5.3.3 of \cite{Friz-Victoir2010}) the spaces
\begin{align*}
\wiener_p  &  :=\overline{C^{\infty}\left(
\left[  0,T\right]  ,\mathbb{R}\right)  \cap\mathcal{W}}^{\left\Vert
\cdot\right\Vert _{p}};\\
\wiener_p^2  &  :=\wiener_p\times\wiener_p\cong
\overline{C^{\infty}\left(
\left[  0,T\right]  ,\mathbb{R}^2\right)  \cap\mathcal{W}^2}^{\left\Vert
\cdot\right\Vert _{p}}.
\end{align*}
For reasons which will become apparent as we progress, it will be beneficial for us to declare that from here on out our process $\{B_t\}$ will be restricted to the probability space $(\wiener_p^2,\borel_{\wiener_p^2}, \prob|_{\wiener_p^2})$; the details of this restriction are included in the Appendix.  Most importantly, the Cameron-Martin space $\rkhs^2$ associated to the restriction of our measure is the same as the Cameron-Martin space associated to $(\wiener^2,\borel,\prob)$ as given previously.  The following proposition shows that elements of $\mathcal{H}$ live within a smaller
variational space, and will be used repeatedly in the sequel.

\begin{proposition}
\label{pvarembedding} Let $r := \frac{1}{2H}$. Then the covariance
kernel $R$ associated to fractional Brownian motion with Hurst parameter $H$
has finite two-dimensional $r$-variation. Also, the associated
reproducing kernel Hilbert space $\mathcal{H}$ may be embedded in the space
$\mathcal{C}_{r}(\mathbb{R})$; furthermore, this embedding is a contraction.
\end{proposition}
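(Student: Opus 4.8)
The plan is to reduce everything to the covariance-of-increments identity and then exploit the anti-persistence of fBm for $H<\half$. For partitions $\Pi_1=\{s_i\}$, $\Pi_2=\{t_j\}$ of $[0,T]$, writing $\Delta_iB:=B_{s_i}-B_{s_{i-1}}$ and $\Delta_jB:=B_{t_j}-B_{t_{j-1}}$, the starting point is that the rectangular increment of $R$ is exactly a covariance of increments,
\[
\Delta_{ij}R=\E\!\left[\Delta_iB\,\Delta_jB\right],
\]
which follows from $R(s,t)=\E[B_sB_t]$ and bilinearity (the pure power terms $s^{2H},t^{2H}$ cancel). The crude Cauchy--Schwarz bound $|\Delta_{ij}R|\le |s_i-s_{i-1}|^{H}|t_j-t_{j-1}|^{H}$ is by itself too weak, since raising it to the power $r=\frac{1}{2H}$ and summing produces $\big(\sum_i|s_i-s_{i-1}|^{1/2}\big)\big(\sum_j|t_j-t_{j-1}|^{1/2}\big)$, which diverges as the mesh shrinks; the point is to recover the missing decay from cancellation.

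For the finite $r$-variation I would split the index pairs into those for which $[s_{i-1},s_i]$ and $[t_{j-1},t_j]$ overlap in positive length and those for which they do not. On overlapping pairs one uses the same Cauchy--Schwarz bound, now harmless because the overlap region is essentially one--dimensional; the model computation is the diagonal $\sum_i|\Delta_{ii}R|^{r}=\sum_i|s_i-s_{i-1}|=T$ obtained when $\Pi_1=\Pi_2$. On non-overlapping pairs I would use that $\phi(u):=|u|^{2H}$ is concave on each half-line, so that an integration by parts gives the fixed-sign representation
\[
\Delta_{ij}R=-\,H(1-2H)\int_{t_{j-1}}^{t_j}\!\!\int_{s_{i-1}}^{s_i}|x-y|^{2H-2}\,dy\,dx,
\]
whose size is governed by the distance between the intervals. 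Since $(2H-2)r=1-\frac1H<-1$ precisely when $H<\half$, the resulting off-diagonal sum converges; this is already transparent on the uniform partition, where by self-similarity $\sum_{i,j}|\Delta_{ij}R|^{r}\to\sum_{k}|\rho(k)|^{r}<\infty$, with $\rho(k)\sim c\,|k|^{2H-2}$ the correlation of unit increments. Controlling this sum uniformly over all pairs of partitions is the main obstacle, and is exactly the place where the hypothesis $H<\half$ is used; I would organise the estimate by grouping blocks according to their distance from the diagonal and summing geometrically (this is the content of the corresponding $\rho$-variation bound in \cite{Friz-Victoir2010}).

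For the embedding I would pass through the reproducing kernel. For $h\in\rkhs$ and a partition $\{t_i\}$, the identity $\langle h,R(t,\cdot)\rangle_{\rkhs}=h(t)$ gives $h(t_i)-h(t_{i-1})=\langle h,\Delta_iR\rangle_{\rkhs}$ with $\Delta_iR:=R(t_i,\cdot)-R(t_{i-1},\cdot)$, whence by $\ell^{r}$--$\ell^{r'}$ duality (with conjugate exponent $r'=\frac{1}{1-2H}$, the right endpoint of \eqref{e.pvalue})
\[
\Big(\sum_i|h(t_i)-h(t_{i-1})|^{r}\Big)^{1/r}=\sup_{\norm{a}_{\ell^{r'}}\le1}\sum_i a_i\langle h,\Delta_iR\rangle_{\rkhs}\le\norm{h}_{\rkhs}\,\sup_{\norm{a}_{\ell^{r'}}\le1}\Big\|\sum_i a_i\Delta_iR\Big\|_{\rkhs}.
\]
Because $\langle\Delta_iR,\Delta_kR\rangle_{\rkhs}=\Delta_{ik}R$, the squared norm on the right is the quadratic form $\sum_{i,k}a_ia_k\Delta_{ik}R=\E\big[(\sum_i a_i\Delta_iB)^2\big]$, which the first part controls by $\norm{R}^{(2D)}_{r}\norm{a}_{\ell^{r'}}^{2}$ via the two-dimensional H\"older inequality; this already yields a continuous embedding. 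To upgrade continuity to a contraction one needs the sharp bound $\E\big[(\sum_i a_i\Delta_iB)^2\big]\le\norm{a}_{\ell^{r'}}^{2}$ (an equality for the trivial partition), which I would establish from the Volterra representation $B_t=\int_0^t K_H(t,s)\,dW_s$ by reducing it to an $\ell^{r'}\to\ell^{r}$ operator-norm bound for the Gram matrix $(\Delta_{ik}R)$; pinning down this constant as exactly $1$ is the second delicate point of the argument.
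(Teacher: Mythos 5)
Your proposal correctly isolates the two ingredients the paper's own proof is built on — the identity $\Delta_{ij}R=\E[\Delta_iB\,\Delta_jB]$ and the negative correlation of disjoint increments when $H<\half$ — but the execution has genuine gaps, one of which is fatal as written. Your treatment of overlapping pairs by Cauchy--Schwarz fails: that bound is the \emph{geometric mean} $\bigl((s_i-s_{i-1})(t_j-t_{j-1})\bigr)^{H}$, and the two partitions in the definition of $\norm{R}^{(2D)}_r$ need not be equal or even comparable, so the diagonal model computation with $\Pi_1=\Pi_2$ is misleading. Concretely, take $\Pi_1=\{0,T\}$ and $\Pi_2$ uniform with $N$ intervals: every pair overlaps, and your bound gives
\[
\sum_{j=1}^N|\Delta_{1j}R|^{r}\le N\cdot\bigl(T\cdot T/N\bigr)^{rH}=T\sqrt{N}\longrightarrow\infty .
\]
What is actually needed is the sharper estimate $|\Delta_{ij}R|\le\bigl((s_i-s_{i-1})\wedge(t_j-t_{j-1})\bigr)^{2H}$ (minimum, not geometric mean), which the paper proves via a concavity case analysis (Lemma~\ref{l.msrbound}); and even with it, one must organize the $s$-intervals within a fixed $t$-strip into the (at most two) straddling ones, the nested ones (telescoped against their own lengths), and the disjoint ones.

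For the non-overlapping pairs you correctly identify the uniform-over-partitions control as the main obstacle, but you then defer it to the $\rho$-variation bound in \cite{Friz-Victoir2010} rather than proving it — and the Remark following the Proposition explains that the known proofs of precisely this statement in that literature contain gaps, which is the entire reason the paper supplies the self-contained Theorem~\ref{t.Rvar}. The irony is that the fixed-sign property you already derived finishes this case in two lines, with no grouping by distance from the diagonal and no geometric summation: since the $\Delta_{ij}R$ over disjoint pairs all have the same sign, for a fixed $t$-strip and, say, the indices $i\le A-1$ lying to the left of it,
\[
\sum_{i\le A-1}|\Delta_{ij}R|^{r}\le\Bigl(\sum_{i\le A-1}|\Delta_{ij}R|\Bigr)^{r}=\bigl|\mu_R\bigl((0,s_{A-1}]\times(t_{j-1},t_j]\bigr)\bigr|^{r}\le(t_j-t_{j-1}),
\]
which is exactly how the paper arrives at $\norm{R}^{(2D)}_r\le(5T)^{2H}$. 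Finally, your duality argument for the embedding gives continuity with constant of order $\bigl(\norm{R}^{(2D)}_r\bigr)^{1/2}$, but the contraction — operator norm exactly $1$ — is left as ``the second delicate point'' and never established; here you are no worse off than the paper, which cites \cite{cass_friz_victoir} and \cite{friz_victoir_2007} for the embedding and contraction and only re-proves the variation bound, but as a self-contained argument your proposal establishes neither the finite $r$-variation nor the contraction.
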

\begin{remark}
The assertions in Proposition~\ref{pvarembedding} were originally made in p. 3363 of \cite{cass_friz_victoir} or p.13 of \cite{friz_victoir_2007}.  However, some technical gaps have been found in their proof that $R \in \mathcal{C}^{(2D)}_r$; as a result, we have included a direct proof of this statement within the Appendix (see Theorem~\ref{t.Rvar} below).
\end{remark} 
Note that the above implies that $\mathcal{H}^{2}\subset\mathcal{C}_{\tilde
{p}}(\mathbb{R}^{2})$, and, since $r<p$, $\mathcal{H}%
^{2}\subset\wiener_p^2$.   Also note that
Proposition~\ref{pvarembedding} implies that $R(s,\cdot)\in\mathcal{C}%
_{r}$ for each fixed $s\in\lbrack0,T]$. 

Let $Y_{m}$ be the
solution to the differential equation
\begin{align*}
d(Y_{m})_{s}  &  =X_{1}(Y_{m})_{s}d(B_{m}^{1})_{s}+X_{2}(Y_{m})_{s}d\left(
B_{m}^{2}\right)  _{s}\\
(Y_{m})_{0}  &  =\mathbf{0}%
\end{align*}
with vector fields given by $X_{1}:=\frac{\partial}{\partial x}-\frac{1}%
{2}y\frac{\partial}{\partial z}$, $X_{2}:=\frac{\partial}{\partial y}+\frac
{1}{2}x\frac{\partial}{\partial z}$. Without a loss of generality, we may fix all of the processes in question at time $T$,
and will elect to do so for the remainder.  Explicit calculations show that
\[
\left(Y_{m}\right)_T=\left(  (B_{m})_T,\frac{1}{2}\int_{0}^{T}(B_{m}^{1})_{s}\;d(B_{m}%
^{2})_{s}-(B_{m}^{2})_{s}\;d(B_{m}^{1})_{s}\right)  .
\]




As a result of Theorem 19 and Corollary 20 of \cite{coutin_qian}, we have the existence of a process 
\[
Y_T := \lim\limits_{m\rightarrow\infty}(Y_m)_T;\quad\text{(a.s. and in}\, L^2)
\]
we may denote this process suggestively as
\[
Y_T=\left(  B_T,\frac{1}{2}\int_{0}^{T}\left(  B_{s}^{1}\;dB_{s}^{2}-B_{s}%
^{2}\;dB_{s}^{1}\right)  \right)  =\left(  B_T,\frac{1}{2}\int_{0}^{T}%
\alpha\left(  B_s,dB_s\right)  \right)
\]
(In this expression we have let $\alpha\left(
a,b\right)  :=a^{1}b^{2}-a^{2}b^{1}.)$ 

\subsection{Calculation of the Derivative of $Y_T$.}

So that we might study the properties of the processes $(Y_{m})_T$ and $Y_T$, let us
introduce the following expressions: given some $\omega:=\omega^{1}%
e_{1}+\omega^{2}e_{2}\in\mathcal{W}_{p}^{2}$, we will let $\tilde{\omega}$ be
the element of $\mathcal{W}^{2}$ defined by
\[
\tilde{\omega}:=\omega^{2}e_{1}-\omega^{1}e_{2}=J\omega,
\]
where $J$ is rotation by $-\pi/2.$ We will let $q:\mathcal{H}^{2}%
\times\mathcal{H}^{2}\rightarrow\mathbb{R}$ denote the symmetric quadratic
form given by%
\begin{align}\label{qhk}
q(h,k)&=\frac{1}{2}\left(  \int_{0}^{T}h(t)\cdot d\tilde{k}(t)+\int_{0}^{T}k(t)\cdot
d\tilde{h}(t)\right)\\\nonumber &\qquad= \int_{0}^{T} h(t)\cdot d\tilde{k}(t) + \half\left( k(T)\cdot \tilde{h}(T)\right).\\\nonumber
\end{align}
The integrals given in the definition of $q$ are to be considered as Young's
integrals; by our previous assumption that $\frac{1}{3}<H<\frac{1}{2}$, we
have that $\frac{2}{r}=4H>1$, and so $q$ is well-defined. Furthermore,
standard Young's integration bounds and Proposition~\ref{pvarembedding} gives us that
\begin{equation}\label{e.qbounds}
|q(h,k)|\leq C\Vert h\Vert_{r}\Vert k\Vert_{r}\leq
C\Vert h\Vert_{\mathcal{H}}\Vert k\Vert_{\mathcal{H}},
\end{equation}
and hence $q$ is continuous in each variable. We may now rewrite our
approximate solutions $Y_{m}$ in the following form:
\[
Y_{m}=\left(  (B_{m})_T,\frac{1}{2}q(B_{m},B_{m})\right)  .
\]
As a consequence of the representation theorem of Riesz, we have the existence
of a linear operator $Q:\mathcal{H}^{2}\rightarrow\mathcal{H}^{2}$ such that
$\langle Qh,k\rangle=q(h,k)$. 
%
\begin{remark}\label{r.epsilon}
Recall that $r = \frac{1}{2H}$; thus, our assumption on the value of $p$ as made in \eqref{e.pvalue} is sufficient to guarantee that the integrals above are well-defined, since this implies that $\frac{1}{p}+\frac{1}{r} >1$.
\end{remark}
\begin{proposition}\label{p.yiinh}
Fix $\alpha\in\mathcal{W}_{p}$; for each partition
$\Pi = \{t_i\}_{i=0}^N\in\mathcal{P}[0,T]$, define the vector $S_{\Pi}\in\mathcal{H}$ in the
following manner:
\[
S_{\Pi}(\cdot):=\sum\limits_{i=1}^{N}\alpha(c_{i})\left[
R(t_{i},\cdot)-R(t_{i-1},\cdot)\right], 
\]
where $c_{i}\in(t_{i-1},t_{i})$.  Then $\rkhs -\lim\limits_{k\rightarrow\infty}S_{\Pi_{k}}$ exists, where $\{\Pi_{k}\}_{k=1}^{\infty}\subset\mathcal{P}[0,T]$ with $|\Pi_{k}|$
converging to zero as $k \longrightarrow \infty$; furthermore, this limit is independent of the family of partitions.  We will denote this limit by
\[
\int_0^T \alpha(t)R(dt,\cdot).
\]
This limit satisfies the following properties:
\begin{enumerate}
\item $
\left\|\int_0^T \alpha(t)R(dt,\cdot)\right\|_\rkhs^2 = \int_{[0,T]^2}\alpha\otimes\alpha\;dR$;
hence, there exists a constant $C > 0$ such that
\[
\left\|\int_0^T \alpha(t)R(dt,\cdot)\right\|^2_\rkhs \leq C\|\alpha\|^2_p \|R\|^{(2D)}_r.
\]
\item For each $h \in \rkhs$, $\left\langle \int_0^T \alpha(t)R(dt,\cdot), h\right\rangle_\rkhs = \int_0^T \alpha(t)dh(t)$.
\item $\left(\int_0^T \alpha(t)R(dt,\cdot)\right)(s) = \int_0^T \alpha(t)R(dt,s)$.
\end{enumerate}
\end{proposition}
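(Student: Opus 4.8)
The plan is to reduce the entire statement to the two-dimensional Young integration theory already recorded in the background, using the reproducing kernel property $\langle h, R(t,\cdot)\rangle_{\mathcal{H}} = h(t)$ as the bridge between $\mathcal{H}$-inner products and increments of $R$. The crucial observation is that, for two partitions $\Pi = \{t_i\}$ and $\Pi' = \{t'_j\}$ with tags $c_i\in(t_{i-1},t_i)$ and $c'_j\in(t'_{j-1},t'_j)$, applying the reproducing property twice (so that $\langle R(s,\cdot),R(t,\cdot)\rangle_{\mathcal{H}} = R(s,t)$) yields
\begin{align*}
\langle S_\Pi, S_{\Pi'}\rangle_{\mathcal{H}} &= \sum_{i,j} \alpha(c_i)\alpha(c'_j)\big\langle R(t_i,\cdot)-R(t_{i-1},\cdot),\, R(t'_j,\cdot)-R(t'_{j-1},\cdot)\big\rangle_{\mathcal{H}} \\
&= \sum_{i,j}(\alpha\otimes\alpha)(c_i,c'_j)\,\Delta_{ij}R,
\end{align*}
which is exactly a Riemann sum for the 2D Young integral $\int_{[0,T]^2}\alpha\otimes\alpha\,dR$. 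Since $\alpha\otimes\alpha\in\mathcal{C}^{(2D)}_p$ with $\|\alpha\otimes\alpha\|^{(2D)}_p\leq\|\alpha\|_p^2$, since $R\in\mathcal{C}^{(2D)}_r$ by Proposition~\ref{pvarembedding}, and since $\frac1p+\frac1r>1$ by Remark~\ref{r.epsilon}, this integral exists and its double Riemann sums converge to it as the mesh tends to zero, independently of the partitions and tags, by Theorem 1.2 of \cite{towghi}.

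First I would establish the $\mathcal{H}$-limit by showing $\{S_{\Pi_k}\}$ is Cauchy. Expanding
\[
\|S_{\Pi_k}-S_{\Pi_l}\|_{\mathcal{H}}^2 = \langle S_{\Pi_k},S_{\Pi_k}\rangle_{\mathcal{H}} - 2\langle S_{\Pi_k},S_{\Pi_l}\rangle_{\mathcal{H}} + \langle S_{\Pi_l},S_{\Pi_l}\rangle_{\mathcal{H}},
\]
each of the three inner products is a double Riemann sum of the above type, built respectively from the partition pairs $(\Pi_k,\Pi_k)$, $(\Pi_k,\Pi_l)$ and $(\Pi_l,\Pi_l)$; as $k,l\to\infty$ all of the relevant meshes tend to zero, so each term converges to $\int_{[0,T]^2}\alpha\otimes\alpha\,dR$ and the right-hand side tends to $0$. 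Completeness of $\mathcal{H}$ then produces the limit. Independence of the family of partitions follows by the standard interleaving argument: given two admissible families, their interleaving again has mesh tending to zero and is therefore convergent, which forces the two subsequential limits to coincide.

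The three asserted properties then follow readily. For (1), continuity of the norm gives $\|\int_0^T\alpha(t)R(dt,\cdot)\|_{\mathcal{H}}^2 = \lim_k\langle S_{\Pi_k},S_{\Pi_k}\rangle_{\mathcal{H}} = \int_{[0,T]^2}\alpha\otimes\alpha\,dR$; the quantitative bound is Towghi's estimate applied to this integral, in which the boundary correction terms $\|(\alpha\otimes\alpha)(0,\cdot)\|_p$, $\|(\alpha\otimes\alpha)(\cdot,0)\|_p$ and $|(\alpha\otimes\alpha)(0,0)|$ all vanish because $\alpha(0)=0$ for $\alpha\in\mathcal{W}_p\subset\mathcal{W}$, leaving $C\|R\|^{(2D)}_r\|\alpha\|_p^2$. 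For (2), continuity of $\langle\,\cdot\,,h\rangle_{\mathcal{H}}$ together with the reproducing property gives $\langle S_{\Pi_k},h\rangle_{\mathcal{H}} = \sum_i\alpha(c_i)[h(t_i)-h(t_{i-1})]$, a one-dimensional Young Riemann sum converging to $\int_0^T\alpha(t)\,dh(t)$ since $\alpha\in\mathcal{C}_p$, $h\in\mathcal{C}_r$ (Proposition~\ref{pvarembedding}) and $\frac1p+\frac1r>1$. Finally (3) is immediate: applying the reproducing property to the limit element and then (2) with $h=R(s,\cdot)$ gives $\big(\int_0^T\alpha(t)R(dt,\cdot)\big)(s) = \big\langle\int_0^T\alpha(t)R(dt,\cdot),\,R(s,\cdot)\big\rangle_{\mathcal{H}} = \int_0^T\alpha(t)\,d\big(R(s,\cdot)\big)(t)$, which equals $\int_0^T\alpha(t)R(dt,s)$ by the symmetry $R(s,t)=R(t,s)$.

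The main obstacle I anticipate is the joint rather than sequential convergence in the Cauchy step: one must be certain that the convergence in \cite{towghi} applies uniformly across the three distinct partition pairs $(\Pi_k,\Pi_k)$, $(\Pi_k,\Pi_l)$, $(\Pi_l,\Pi_l)$, and not merely along a single diagonal family, which is precisely what the partition-independence in Theorem 1.2 of \cite{towghi} supplies. A secondary point needing care is confirming that $\alpha\otimes\alpha$ genuinely lies in $\mathcal{C}^{(2D)}_p$ and that its boundary traces vanish, so that the estimate in (1) collapses to the clean bound as stated.
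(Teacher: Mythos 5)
Your proposal is correct and follows essentially the same route as the paper's proof: both use the reproducing kernel property to identify $\langle S_{\Pi},S_{\Pi'}\rangle_{\mathcal{H}}$ with a double Riemann sum for $\int_{[0,T]^2}\alpha\otimes\alpha\,dR$, establish the Cauchy property via the three-term expansion all converging to that same 2D Young integral (justified by Theorem 1.2 of \cite{towghi} together with Proposition~\ref{pvarembedding} and $\frac1p+\frac1r>1$), and then obtain (1) by passing to the limit in the norm identity, (2) by Riemann sums for the one-dimensional Young integral, and (3) by taking $h=R(s,\cdot)$ in (2). Your added care about the vanishing boundary terms in Towghi's estimate (since $\alpha(0)=0$) and the interleaving argument for partition independence are fine refinements of steps the paper treats more briefly.
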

\begin{proof}
First note that $\frac{1}{p}+\frac{1}{r}>(1-2H)+2H = 1$,
which implies that 
\begin{enumerate}
\item the Young's integral of $\alpha$
against $R(\cdot,s)$ for any $s\in\lbrack0,T]$ is well-defined, and
\item the 2D-Young's integral of $\alpha \otimes \alpha$ against $R$ is well-defined.
\end{enumerate} 
It follows from Theorem 2.1 of \cite{towghi} that for each $k$,
\begin{align}
\Vert S_{\Pi_{k}}\Vert_{\mathcal{H}}^{2} &=\sum\limits_{i=1}^{\#(\Pi_k)}
\sum\limits_{j=1}^{\#(\Pi_k)}\alpha(c_{i})\alpha(c_{j})\left\langle \Delta_iR(t_{i},\cdot), \Delta_jR(t_{j},\cdot)\right\rangle_\rkhs\label{normyoungint}\\
&\leq C\|\alpha\|^2_p\|R\|^{(2D)}_r,\nonumber\\
\nonumber
\end{align}
where $C$ is a constant depending only on $p$ and $r$. 
Given 
any two partitions $\Pi_n = \{s_i\}, \Pi_m = \{t_k\}$ in the family, for $c_i \in [s_{i-1},s_i], d_k \in [t_{k-1}, t_k]$, 
\begin{align*}
\Vert S_{\Pi_{n}}&- S_{\Pi_{m}}\Vert_\rkhs^{2}=\Vert S_{\Pi_{n}}\Vert_\rkhs^{2}+\Vert
S_{\Pi_{m}}\Vert_\rkhs^{2}-2\langle S_{\Pi_{n}},S_{\Pi_{m}}\rangle_\rkhs\\
&  =\sum\limits_{i=1}^{\#(\Pi_n)}%
\sum\limits_{j=1}^{\#(\Pi_n)}\alpha(c_{i})\alpha(c_{j})\left[
\Delta_{ij}R(s_i,s_j)\right]\\
&  \qquad+\sum\limits_{k=1}^{\#(\Pi_m)}%
\sum\limits_{l=1}^{\#(\Pi_m)}\alpha(d_{k})\alpha(d_{l})\left[
\Delta_{kl}R(t_k,t_l)\right]\\
&  \qquad-\,2\sum\limits_{i=1}^{\#(\Pi_n)}%
\sum\limits_{k=1}^{\#(\Pi_m)}\alpha(c_{i})\alpha(d_{k})\left[
\Delta_{ik}R(s_i, t_k)\right]\\
&  \overset{n,m\rightarrow\infty}{\longrightarrow}\int_{[0,T]^{2}}(\alpha\otimes\alpha)(s,t)\;dR(s,t)\\
&\qquad\qquad+\int_{[0,T]^{2}}(\alpha\otimes\alpha)(s,t)\;dR(s,t)\\
&  \qquad\qquad-2\int_{[0,T]^{2}}(\alpha\otimes\alpha)(s,t)\;dR(s,t)=0.
\end{align*}
Hence, the completeness of $\rkhs$ implies the existence of $\int_0^T \alpha(t)R(dt,\cdot)$.  Since the 2D-Young's integral is independant of choice of partitions, one may also see from the calculation above that the limit of $S_{\Pi_k}$ is also independant of choice of partition, as claimed. Letting $k$ tend to infinity in \eqref{normyoungint} and applying bounds as in Theorem 1.2 of \cite{towghi} proves (1).  For an arbitrary $h \in \rkhs$, we note that
\begin{align*}
\left\langle \int_{0}^{T}\alpha(t)\;R(dt,\cdot),h\right\rangle
_{\mathcal{H}}&=\lim\limits_{|\Pi|\rightarrow0}\langle S_{\Pi},h\rangle
_{\mathcal{H}}\\
  &  =\lim\limits_{|\Pi|\rightarrow0}\sum\limits_{i=1}^{\#(\Pi)}%
\alpha(c_{i})\big\langle R(t_{i+1},\cdot)-R(t_{i},\cdot
),h\big\rangle_{\mathcal{H}}\\
&  =\lim\limits_{|\Pi|\rightarrow0}\sum\limits_{i=1}^{\#(\Pi)}\alpha%
(c_{i})\left[  h(t_{i+1})-h(t_{i})\right] \\
&  =\int_{0}^{T}\alpha(t)\;dh(t),\\
\end{align*}
and so (2) holds.  In particular, by setting $h = R(s,\cdot)$, (3) is a consequence of (2).
\end{proof}

\begin{proposition}\label{p.qact}
Let $Q:\rkhs^2 \rightarrow \rkhs^2$ be the bounded operator defined by 
\[
q(h,k) = \left\langle Qh, k \right\rangle_{\rkhs^2}.
\]
Then the action of $Q$ on elements of $\rkhs^2$ is given by
\begin{equation}\label{e.qact}
Qh :=\frac{1}{2}R\left(T,\cdot\right)  \tilde{h}\left(T\right)  -\int
_{0}^{T}\tilde{h}\left(t\right)  R\left(dt,\cdot\right)  .
\end{equation}
\end{proposition}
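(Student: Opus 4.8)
The plan is to exploit the fact that, by the Riesz representation already invoked just before the statement, the operator $Q$ is \emph{uniquely} determined by the requirement that $\langle Qh, k\rangle_{\rkhs^2} = q(h,k)$ for every $k \in \rkhs^2$. Thus, writing $\tilde{Q}h$ for the right-hand side of \eqref{e.qact}, it suffices to check two things: that $\tilde{Q}h$ is a genuine element of $\rkhs^2$, and that $\langle \tilde{Q}h, k\rangle_{\rkhs^2} = q(h,k)$ for all $k$. The first point is quick: $R(T,\cdot)\in\rkhs$ by the reproducing property, so the boundary term $\frac12 R(T,\cdot)\tilde{h}(T)$ lies in $\rkhs^2$; and the integral term is handled coordinatewise by Proposition~\ref{p.yiinh}, applied with $\alpha = \tilde{h}^j$ (legitimate since $\tilde{h}^j \in \rkhs \subset \mathcal{C}_r \subset \wiener_p$), which guarantees $\int_0^T \tilde{h}^j(t)\,R(dt,\cdot) \in \rkhs$.

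First I would compute $\langle \tilde{Q}h, k\rangle_{\rkhs^2}$ by splitting it across the two coordinates of $\rkhs^2$ and treating the boundary and integral contributions separately. For the boundary term, the reproducing kernel identity $\langle R(T,\cdot), k^j\rangle_\rkhs = k^j(T)$ collapses the pairing to $\frac12 \sum_{j} \tilde{h}^j(T) k^j(T) = \frac12\,\tilde{h}(T)\cdot k(T)$. For the integral term, property (2) of Proposition~\ref{p.yiinh} converts each abstract $\rkhs$-pairing into a concrete Young integral, giving $\sum_j \big\langle \int_0^T \tilde{h}^j(t)R(dt,\cdot),\, k^j\big\rangle_\rkhs = \sum_j \int_0^T \tilde{h}^j(t)\,dk^j(t) = \int_0^T \tilde{h}(t)\cdot dk(t)$. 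Assembling these yields $\langle \tilde{Q}h, k\rangle_{\rkhs^2} = \frac12\,\tilde{h}(T)\cdot k(T) - \int_0^T \tilde{h}(t)\cdot dk(t)$.

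It then remains to match this against the asymmetric representation of $q$ recorded on the second line of \eqref{qhk}, namely $q(h,k) = \int_0^T h(t)\cdot d\tilde{k}(t) + \frac12\,k(T)\cdot \tilde{h}(T)$. The two boundary terms agree by symmetry of the Euclidean dot product, so the whole identity reduces to the claim that $-\int_0^T \tilde{h}\cdot dk = \int_0^T h\cdot d\tilde{k}$. This is a purely algebraic antisymmetry coming from $\tilde{\cdot}$ being rotation by $-\pi/2$: with $J^\top = -J$ (equivalently $\tilde{k}^1 = k^2,\ \tilde{k}^2 = -k^1$, and likewise for $h$), one has $h\cdot d\tilde{k} = h\cdot J\,dk = -(Jh)\cdot dk = -\tilde{h}\cdot dk$ at the level of the integrand, so the component Young integrals on both sides coincide with $\int_0^T (h^1\,dk^2 - h^2\,dk^1)$. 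This closes the verification, and by uniqueness of the Riesz representative it establishes \eqref{e.qact}.

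I would expect no serious analytic obstacle here: all of the hard work — the existence of the vector-valued integral $\int_0^T \tilde{h}(t)R(dt,\cdot)$ in $\rkhs$, its norm bound, and above all the inner-product identity of property (2) — has already been packaged into Proposition~\ref{p.yiinh}, whose use is the true linchpin of the argument. The one place demanding genuine care is the bookkeeping of the rotation $J$: one must keep the sign conventions of $\tilde{h}$ and $\tilde{k}$ consistent so that the boundary terms land with matching signs and the antisymmetry $h\cdot d\tilde{k} = -\tilde{h}\cdot dk$ comes out correctly, since a sign slip there would spuriously double or cancel the integral contribution.
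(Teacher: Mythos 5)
Your proposal is correct and follows essentially the same route as the paper's proof: pair the claimed expression against an arbitrary $k\in\rkhs^2$, collapse the boundary term via the reproducing property of $R(T,\cdot)$, convert the integral term to a Young integral via Proposition~\ref{p.yiinh}(2), and use the antisymmetry $\int_0^T \tilde{h}\cdot dk = -\int_0^T h\cdot d\tilde{k}$ of the rotation $J$ to match \eqref{qhk}. Your additional remarks (Riesz uniqueness, membership of the right-hand side in $\rkhs^2$) only make explicit what the paper leaves implicit.
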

\begin{proof}
Pick an arbitrary $k \in \rkhs^2$.   The inner product of $k$ against each of the terms on the right hand side of \eqref{e.qact} is given as
\begin{align*}
\left\langle R\left(T,\cdot\right)  \tilde{h}\left(T\right), k\right\rangle_{\rkhs^2}  &= \tilde{h}^1(T)\left\langle R\left(T,\cdot\right), k^1\right\rangle_{\rkhs} + \tilde{h}^2(T)\left\langle R\left(T,\cdot\right), k^2\right\rangle_{\rkhs}\\
&= \tilde{h}^1(T)k^1(T) + \tilde{h}^2(T)k^2(T) = \tilde{h}(T)\cdot k(T),\\
\end{align*}
and, as a result of Proposition~\ref{p.yiinh},
\begin{align*}
\left\langle \int_{0}^{T}\tilde{h}\left(t\right)R\left(dt,\cdot\right), k\right\rangle_{\rkhs^2} &= \left\langle \int_{0}^{T}\tilde{h}^1\left(t\right)R\left(dt,\cdot\right), k^1\right\rangle_{\rkhs}\\
&\qquad + \left\langle \int_{0}^{T}\tilde{h}^2\left(t\right)R\left(dt,\cdot\right), k^2\right\rangle_{\rkhs}\\
&= \int_{0}^{T}\tilde{h}^1\left(t\right)dk^1(t) + \int_{0}^{T}\tilde{h}^2\left(t\right)dk^2(t)\\
&= \int_{0}^{T}\tilde{h}\left(t\right)\cdot dk(t) = -\int_{0}^{T}h\left(t\right)\cdot d\tilde{k}(t).\\
\end{align*}
By combining these terms and comparing to \eqref{qhk}, we see that the claim is proven.
\end{proof}
\begin{proposition}
\label{Qprops} Let $Q:\mathcal{H}^{2}\longrightarrow\mathcal{H}^{2}$ be the
operator defined above.

\begin{enumerate}

\item[1.] $Q$ may be extended to an operator from $\mathcal{W}_{p}^{2}$ into
$\mathcal{H}^{2}$, which will also be denoted by $Q$; for any $\omega
\in\mathcal{W}_{p}^{2}$,
\[
Q\omega:=\frac{1}{2}R(T,\cdot)\tilde{\omega}(T)-\int_{0}^{T}\tilde{\omega
}(t)\;R(dt,\cdot).
\]

\item[2.] Q is a bounded operator on $\wiener_p^2$.
\end{enumerate}
\end{proposition}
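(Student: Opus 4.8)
The plan is to read off both statements directly from the explicit formula in Proposition~\ref{p.qact}, using Proposition~\ref{p.yiinh} to make sense of the integral term for paths that need not lie in $\rkhs^2$. The only genuinely new point is that this formula continues to define an element of $\rkhs^2$ when $h$ is replaced by an arbitrary $\omega\in\wiener_p^2$, and the estimates needed for this are already packaged in Proposition~\ref{p.yiinh}.

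First I would record the elementary observation that $\omega\mapsto\tomega$ is a linear isometry of $\wiener_p^2$: since $J$ is a rotation of $\reals^2$ we have $|\tomega(t)|=|\omega(t)|$ for every $t$, hence $\norm{\tomega}_p=\norm{\omega}_p$, and in particular each component $\tomega^i$ lies in $\wiener_p$ with $\norm{\tomega^i}_p\le\norm{\omega}_p$. This is exactly what lets Proposition~\ref{p.yiinh} be applied componentwise to $\tomega$, producing
\[
\int_0^T\tomega(t)\,R(dt,\cdot):=\left(\int_0^T\tomega^1(t)\,R(dt,\cdot),\ \int_0^T\tomega^2(t)\,R(dt,\cdot)\right)\in\rkhs^2,
\]
together with the bound $\|\int_0^T\tomega^i(t)\,R(dt,\cdot)\|_{\rkhs}^2\le C\norm{\omega}_p^2\,\|R\|^{(2D)}_r$ for each $i$. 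The remaining term $\half R(T,\cdot)\tomega(T)$ is the reproducing kernel $R(T,\cdot)\in\rkhs$ scaled by the evaluation vector $\tomega(T)\in\reals^2$, so it too is a well-defined element of $\rkhs^2$, with norm controlled by $\norm{R(T,\cdot)}_{\rkhs}^2=R(T,T)=T^{2H}$ and $|\tomega(T)|=|\omega(T)|\le\norm{\omega}_p$. Assembling the two terms shows $Q\omega\in\rkhs^2$, which is part~1. Consistency of the formula with the original operator is immediate: for $h\in\rkhs^2\subset\wiener_p^2$ the right-hand side is precisely the expression for $Qh$ established in Proposition~\ref{p.qact}, so the map really is an extension.

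For part~2 I would simply collect the two estimates above through the triangle inequality in $\rkhs^2$, yielding a constant $C$ (depending only on $p$, $r$, $T$, and the finite quantity $\|R\|^{(2D)}_r$ furnished by Proposition~\ref{pvarembedding}) with $\norm{Q\omega}_{\rkhs^2}\le C\norm{\omega}_p$ for all $\omega\in\wiener_p^2$. Since the embedding $\rkhs^2\hookrightarrow\wiener_p^2$ is a contraction (again Proposition~\ref{pvarembedding}), composing with it gives $\norm{Q\omega}_p\le\norm{Q\omega}_{\rkhs^2}\le C\norm{\omega}_p$, so $Q$ is bounded as an operator on $\wiener_p^2$.

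I do not expect a serious obstacle here: the analytic difficulty has been absorbed into Proposition~\ref{p.yiinh}, whose hypotheses ($\frac1p+\frac1r>1$ and $R\in\mathcal{C}^{(2D)}_r$) are exactly what make the integral term bounded. The one place to be careful is bookkeeping rather than analysis, namely verifying that $J$ preserves the $p$-variation norm and that Proposition~\ref{p.yiinh} legitimately applies to the rotated components $\tomega^i$, which are only of finite $p$-variation and need not belong to $\rkhs$.
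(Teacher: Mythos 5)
Your proposal is correct and follows essentially the same route as the paper: part~1 is obtained by applying Proposition~\ref{p.yiinh} componentwise to $\tilde{\omega}$, and part~2 by bounding the reproducing-kernel term via $\Vert R(T,\cdot)\Vert_{\mathcal{H}}^{2}=T^{2H}$ and the integral term via the estimate in Proposition~\ref{p.yiinh}(1), finiteness coming from Proposition~\ref{pvarembedding}. The only cosmetic difference is that the paper expands $\Vert Q\omega\Vert_{\mathcal{H}^{2}}^{2}$ exactly (picking up a cross term bounded by the one-dimensional Young estimate) to get an explicit constant, whereas you use the triangle inequality; your added remarks that the formula agrees with $Q$ on $\mathcal{H}^{2}$ and that boundedness into $\mathcal{H}^{2}$ composes with the contraction $\mathcal{H}^{2}\hookrightarrow\mathcal{W}_{p}^{2}$ are points the paper leaves implicit.
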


\begin{proof}
\begin{enumerate}

\item[1.] That $Q$ is well-defined as an operator on $\mathcal{W}_{p}^{2}$
follows from Proposition~\ref{p.yiinh}; it is then of immediate consequence that
$Q\omega\in\mathcal{H}^{2}$ for any $\omega=(\omega^{1},\omega^{2}%
)\in\mathcal{W}_{p}^{2}$.

\item[2.] 
From this fact, along with the estimate given by \eqref{normyoungint}, the
value of $\Vert Q\omega\Vert_{\mathcal{H}^{2}}^{2}$ for a fixed $\omega
\in\mathcal{W}_{p}^{2}$ may be calculated:
\begin{align*}
\Vert Q\omega\Vert_{\mathcal{H}^{2}}^{2}  &  =\sum\limits_{i=1}%
^{2}\bigg\|\frac{1}{2}R(T,\cdot)\tilde{\omega}^{i}(T)-\int_{0}^{T}%
\tilde{\omega}^{i}(t)\;R(dt,\cdot)\bigg\|_{\mathcal{H}}^{2}\\
&  =\sum\limits_{i=1}^{2}\frac{1}{4}\bigg\|R(T,\cdot)\tilde{\omega}%
^{i}(T)\bigg\|_{\mathcal{H}}^{2}+\bigg\|\int_{0}^{T}\tilde{\omega}%
^{i}(t)\;R(dt,\cdot)\bigg\|_{\mathcal{H}}^{2}\\
&  \qquad-\left\langle R(T,\cdot)\tilde{\omega}^{i}(T),\int_{0}^{T}%
\tilde{\omega}^{i}(T)\;R(dt,\cdot)\right\rangle _{\mathcal{H}}\\
&  =\sum\limits_{i=1}^{2}\frac{T^{2H}|\tilde{\omega}^{i}(T)|^{2}}%
{4}+\bigg\|\int_{0}^{T}\tilde{\omega}^{i}(t)\;R(dt,\cdot)\bigg\|_{\mathcal{H}%
}^{2}\\
&  \qquad-\tilde{\omega}^{i}(T)\int_{0}^{T}\tilde{\omega}^{i}(t)\;R(dt,T)\\
&  \leq\sum\limits_{i=1}^{2}\frac{T^{2H}|\tilde{\omega}^{i}(T)|^{2}}%
{4}+\left\vert \int_{[0,T]^{2}}\left(  \tilde{\omega}^{i}\otimes\tilde{\omega
}^{i}\right)  (s,t)\;dR(s,t)\right\vert \\
&  \qquad+\left\vert \int_{0}^{T}\tilde{\omega}^{i}(T)\tilde{\omega}^{i}%
(t)\;R(dt,T)\right\vert \\
&  \leq\sum\limits_{i=1}^{2}\Vert\tilde{\omega}^{i}\Vert_{p}%
^{2}\left(  \frac{T^{2H}}{4}+\Vert R \Vert^{(2D)}_{r} + \Vert R(T,\cdot) \Vert_{r} \right) \\
& \leq\left(  \frac{T^{2H}}{4}+2\Vert R \Vert^{(2D)}_{r}\right)\Vert\omega\Vert^2_p, \\
\end{align*}
which is finite by Proposition~\ref{pvarembedding}.
\end{enumerate}
\end{proof}

Let us denote by $QB$ the random variable taking values in $\mathcal{H}^{2}$:
\[
QB := \frac{1}{2} R(T,\cdot)\tilde{B}_{T} - \int_{0}^{T} \tilde{B}%
_{t}\;R(dt,\cdot),
\]
where $\tilde{B} := (B^2,-B^1)$.

We are now in a position to calculate the derivative of the process $Y_T$.  To begin with, for $i=1,2$ let us denote by $R_{t}^{i}$ the linear operator on $\mathcal{H}^{2}$ with
action given by
\[
R_{t}^{i}h=\langle R(t,\cdot)e_{i},h\rangle e_i=h^{i}(t)e_i,
\]
where $\{e_1,e_2\}$ is the standard basis of $\reals^2$.  It is immediate that $DB=R_{T}^{1}+R_{T}^{2}$.

\begin{proposition}
\label{p.diff}The process $Y_T$ has a derivative, $DY_T$, taking values in the space of linear operators from $\rkhs^2$ into $\reals^3$
, with action given by
\[
DY_Th=\left(  R_{T}^{1}h+R_{T}^{2}h,\langle QB
,h\rangle_{\mathcal{H}^{2}}\right)  \text{ a.s.}%
\]

\end{proposition}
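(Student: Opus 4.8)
The plan is to realise $DY_T$ as the $L^2$-limit of the derivatives of the smooth approximations $(Y_m)_T$ and to invoke the closability of $D$. The first two coordinates require no work: since $T\in D_m$ we have $(B_m)_T=B_T$ for every $m$, so the $\mathbb{R}^2$-valued part of $Y_T$ is simply the coordinate evaluation $\omega\mapsto\omega(T)$, whose derivative is the constant operator $R_T^1+R_T^2$ recorded immediately before the statement. Everything therefore reduces to the scalar area coordinate $A_T=\half q(B,B)=\lim_m \half q(B_m,B_m)$.

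First I would note that $A_m:=\half q(B_m,B_m)$ is a genuine cylinder functional in $\mathcal{S}\subset\mathbb{D}^\infty$: the path $B_m=\pi_m B$ depends only on the finitely many values $\{B_{t_j}:t_j\in D_m\}$, and $q(B_m,B_m)$ is a quadratic, polynomially bounded function of them. Since $\pi_m$ is linear and $q$ is bilinear and symmetric, differentiating $A_m(\omega+th)=\half q(\pi_m\omega+t\pi_m h,\pi_m\omega+t\pi_m h)$ at $t=0$ yields the clean identity $DA_m\cdot h=q(B_m,\pi_m h)$ for every $h\in\mathcal{H}^2$.

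Next I would identify the candidate limit. Because one factor of $q(B,h)$ lies in $\mathcal{H}^2\subset\mathcal{C}_r$ while $B\in\mathcal{W}_p^2$ and $\frac1p+\frac1r>1$ (Remark~\ref{r.epsilon}), the Young integrals in \eqref{qhk} defining $q(B,h)$ converge. Repeating the computation in the proof of Proposition~\ref{p.qact}, now with the extended operator of Proposition~\ref{Qprops} and property (2) of Proposition~\ref{p.yiinh} applied to $\tilde B$, together with the pointwise rotation identity $\omega\cdot d\tilde h=-\tilde\omega\cdot dh$, gives
\[
q(B,h)=\half\,\tilde B_T\cdot h(T)-\int_0^T \tilde B_t\cdot dh(t)=\langle QB,h\rangle_{\mathcal{H}^2}.
\]
Hence the proposed action is $h\mapsto(R_T^1h+R_T^2h,\langle QB,h\rangle_{\mathcal{H}^2})$.

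It remains to promote this pointwise agreement to $L^2$-convergence of gradients and close the argument. Since $A_m\to A_T$ in $L^2$ (Theorem 19 and Corollary 20 of \cite{coutin_qian}), it suffices to show $DA_m\to\langle QB,\cdot\rangle$ in $L^2(\mathcal{W}_p^2;\mathcal{H}^2)$, after which closability of $D$ forces $A_T\in\mathbb{D}^{1,2}$ with $DA_T=\langle QB,\cdot\rangle$. I expect this convergence to be the main obstacle. The natural estimate splits $q(B_m,\pi_m h)-q(B,h)=q(B_m-B,\pi_m h)+q(B,\pi_m h-h)$ and bounds the two pieces by $C\|B_m-B\|_p\|\pi_m h\|_r$ and $C\|B\|_p\|\pi_m h-h\|_r$ using \eqref{e.qbounds}; Theorem~\ref{t.uniformpvar}(1) and Proposition~\ref{pvarembedding} control $\|\pi_m h\|_r$ uniformly by $\|h\|_{\mathcal{H}}$. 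The delicate inputs are then the $L^2$ control of $\|B_m-B\|_p$, which must come from the Gaussian rough-path estimates of \cite{coutin_qian} and \cite{Friz-Victoir2010}, and the operator-norm convergence of $\pi_m-I$ from $\mathcal{H}$ into $\mathcal{C}_r$ needed to make the second bound uniform over the unit ball; these are precisely where the range $\frac13<H<\frac12$ and the choice of $p$ in \eqref{e.pvalue} are consumed.
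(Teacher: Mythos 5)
Your architecture is sound and essentially parallels the paper's: compute $DA_m h = q(B_m,\pi_m h)$ for the dyadic approximations, identify the candidate limit $q(B,h)=\langle QB,h\rangle_{\mathcal{H}^2}$ (your computation of this identity is correct and agrees with the paper), and close via $L^2$-convergence of the gradients together with closability of $D$. The problem is that your proof stops exactly where the real work begins: the two ``delicate inputs'' you name in the last paragraph are the entire content of the proposition, and you assert rather than prove them. Moreover, one of them is stated in a form that is not available and quite possibly false: you require operator-norm convergence of $\pi_m - I$ from $\rkhs$ into $\mathcal{C}_r$ at the \emph{critical} exponent $r = \frac{1}{2H}$. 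The natural route to such a statement is interpolation, $\|\pi_m h - h\|_q^q \leq \|\pi_m h - h\|_r^r\,(2\|\pi_m h - h\|_\infty)^{q-r}$, combined with Theorem~\ref{t.uniformpvar} and the uniform H\"older bound $|h(t)-h(s)| \leq \|h\|_{\rkhs}|t-s|^H$ on the unit ball of $\rkhs$; this yields uniform convergence only for $q$ \emph{strictly} greater than $r$, and degenerates at $q=r$. The statement can be repaired — by \eqref{e.pvalue} one has $\frac1p+\frac1r>1$ strictly, so the Young bound \eqref{e.qbounds} survives with any $q \in \left(r, \frac{p}{p-1}\right)$ in place of $r$, and the interpolation argument then closes the second term — but you neither observe this slack nor carry out the estimate. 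Likewise $\E\|B_m - B\|_p^2 \to 0$ is true but nontrivial (it needs either the rough-path convergence results or an interpolation/Fernique/dominated-convergence argument of its own); ``must come from the Gaussian rough-path estimates'' is a pointer, not a proof.

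It is worth noting that the paper's proof avoids both of your missing inputs entirely. It reduces \eqref{e.DAconv} to showing $\E\left\|\int_0^T \tB_t\,R(dt,\cdot) - \int_0^T (\tB_m)_t\,R_m(dt,\cdot)\right\|_{\rkhs^2}^2 \to 0$, where both objects are the $\rkhs$-valued integrals of Proposition~\ref{p.yiinh}; it then proves convergence of these $\rkhs$-valued random variables by pairing against an arbitrary $h \in \rkhs$ and passing the Riemann sums to the limit, obtains an $m$-independent bound $C\|B\|_p^2\|R\|^{(2D)}_r$ from Proposition~\ref{p.yiinh} together with Theorem~\ref{t.uniformpvar}, and concludes with Fernique's theorem and dominated convergence. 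In particular, no $L^2$ control of $\|B_m - B\|_p$ and no operator-norm statement about $\pi_m$ on $\rkhs$ is ever required: the uniform-in-$m$ bound plus pointwise identification of the limit does the work that your decomposition pushes onto the two unproven estimates.
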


\begin{proof}
Recall that $Y_T = (B_T,A_T)$, where $A_T$ was defined as the almost sure limit of processes given by \eqref{e.am}.
We have that $Y_T$ is continuously $\mathcal{H}^{2}$-differentiable by
\cite[Proposition 3]{coutin_friz_victoir}, and Corollaries 16 and 20 of
\cite{coutin_qian} imply that $\mathbb{E}\left|(Y_{m})_T-Y_T\right|^{2}\rightarrow0$ as
$m\rightarrow\infty.$ We claim that $DA_Th = \langle QB, h \rangle$; to prove this, it suffices to show that
\begin{equation}\label{e.DAconv}
\mathbb{E}\Big\|\langle QB,\cdot\rangle-D\left(\frac{1}{2}q(B_{m},B_{m})\right)
\Big\|_{(\mathcal{H}^{2})^{\ast}}^{2}\overset{m\rightarrow\infty
}{\longrightarrow}0.
\end{equation}
Recall that the process $B_{m}$ was defined as the
dyadic linear approximator to our fractional Brownian motion $B$; similarly,
we will denote by $R_{m}(u,v)$ the $m$-th dyadic approximation of the kernel
$R\in\mathcal{H}$ in the first variable; i.e.,
\[
R_{m}(u,v):=\pi_{m}(R(\cdot,v)\big)(u).
\] 
Since $B(T)\cdot\tB(T) = 0$, it follows from \eqref{qhk} that
\begin{align*}
q(B_m,B_m) &= \int_0^T (B_m)_t\cdot d(\tB_m)_t\\
&= \half\sum\limits_{k=0}^{2^{m}} \Big(B_{t_i}+B_{t_{i-1}}\Big)\cdot\left(\tB_{t_i}-\tB_{t_{i-1}}\right).\\
\end{align*}
By the product rule (see Proposition 1.2.3 of \cite{nualart}, for example),
\begin{align*}
Dq(B_m,B_m)h &= \half\sum\limits_{k=0}^{2^{m}}\bigg[\Big(h(t_i)+h(t_{i-1})\Big)\cdot\left(\tB_{t_i}-\tB_{t_{i-1}}\right)\\
&\qquad\qquad + \Big(B_{t_i}+B_{t_{i-1}}\Big)\cdot\left(\tilde{h}(t_i)-\tilde{h}(t_{i-1})\right)\bigg]\\
&= \int_0^T h_m(t)\cdot d(\tB_m)_t + \int_0^T (B_m)_t\cdot d\tilde{h}_m(t)\\
&= h(T)\cdot\tB_T - 2\int_0^T (\tB_m)_t\cdot dh_m(t).\\
& = \left\langle R(T,\cdot)\tB_T - 2\int_0^T (\tB_m)_t\;R_m(dt,\cdot), h\right\rangle_{\rkhs^2}.\\
\end{align*}
Since $\Vert\langle h,\cdot\rangle\Vert_{\mathcal{H}^{\ast}}=\Vert
h\Vert_{\mathcal{H}}$ for any Hilbert space $\mathcal{H}$, we can use the above calculations to rewrite the left side of \eqref{e.DAconv} as 
\begin{align*}
\E&\left\| QB - \left(\half R(T,\cdot)\tB_T - \int_0^T (\tB_m)_t\;R_m(dt,\cdot)\right)\right\|^2_{\rkhs^2}\\
&\qquad=\E\left\|  \int_0^T \tB_t\;R(dt,\cdot)- \int_0^T (\tB_m)_t\;R_m(dt,\cdot)\right\|^2_{\rkhs^2}.\\
\end{align*}
Hence, to prove the claim is it required for us to show that
\begin{equation}\label{e.expconv}
\mathbb{E}\left\Vert \int_{0}^{T}B^{i}_tR(dt,\cdot)-\int_{0}^{T}%
(B_{m}^{i})_t R_{m}(dt,\cdot)\right\Vert _{\mathcal{H}}^{2}\stackrel{m\rightarrow\infty}{\longrightarrow}0.
\end{equation}
We note that for any $h \in \rkhs$,
\begin{align*}
\lim\limits_{m\rightarrow\infty} &\left\langle \int_0^T (B_m)_t\;R_m(dt,\cdot), h\right\rangle_{\rkhs}\\
& = \half\Bigg[\lim\limits_{m\rightarrow\infty}\sum\limits_{k=0}^{2^{m}}\bigg((B_m)_{t_k}+(B_m)_{t_{k-1}}\bigg)\bigg(h(t_k)-h(t_{k-1})\bigg)\Bigg]\\
&= \half\Bigg[\lim\limits_{m\rightarrow\infty}\sum\limits_{k=0}^{2^{m}}(B_m)_{t_k}\bigg(h(t_k)-h(t_{k-1})\bigg)\\
&\qquad\qquad\qquad\quad + (B_m)_{t_{k-1}}\bigg(h(t_k)-h(t_{k-1})\bigg)\Bigg]\\
&= \half\left[2\int_0^T B_t\;dh(t)\right] = \left\langle \int_0^T B_t\;R(dt,\cdot),h\right\rangle_{\rkhs}\\
\end{align*}
and so $\int_0^T (B_m)_t\;R_m(dt,\cdot)$ converges pointwise in $\rkhs$ to $\int_0^T B_t \;R(dt,\cdot)$.

Applying Proposition~\ref{p.yiinh}, along with Theorem~\ref{t.uniformpvar}, we may conclude that there exists a constant $C$ for which
\begin{align*}
\left\|\int_0^T B_t\;R(dt,\cdot)\right\|^2_{\rkhs} &\leq C\|B\|^2_p\|R\|^{(2D)}_r,\\
\left\|\int_0^T (B_m)_t\;R_m(dt,\cdot)\right\|^2_{\rkhs} &\leq C\|B\|^2_p\|R\|^{(2D)}_r,\\
\end{align*}
with the second inequality being independent of $m$.  Hence, we may use Fernique's Theorem (Theorem 2.6 of \cite{daprato}) to conclude that
\begin{equation}\label{e.fernique}
\left\|\int_0^T B_t\;R(dt,\cdot) - \int_0^T (B_m)_t\;R_m(dt,\cdot)\right\|_\rkhs^2 \leq C\|B\|^2_p < \infty.
\end{equation}
Hence, we may apply Dominated Convergence to conclude that \eqref{e.expconv} holds, as desired.

\end{proof}
\begin{remark}
In fact, we have shown something slightly stronger in the above proof.  By changing the exponent on the left-hand side of \eqref{e.fernique}, we may conclude that $\left\|D(Y_m)_T - DY_T\right\|_{(\rkhs^2)^*}$, converges to zero in all $L^j, j \geq 1$.  By applying the triangle inequality, we also find that $\|DY_T\|_{(\rkhs^2)^*} \in L^{\infty -}$.
\end{remark}

\begin{proposition}\label{p.smooth}
The random variable $Y_T$ is in $\mathbb{D}^{\infty}$.
\end{proposition}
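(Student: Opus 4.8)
The plan is to realize $Y_T$ as a limit, in every $\mathbb{D}^{k,q}$ norm, of the smooth functionals $(Y_m)_T$, and then to invoke completeness of $\mathbb{D}^{k,q}$. First I would observe that each $(Y_m)_T$ is a polynomial of degree at most two in the finitely many increments $\{B_{t_j}-B_{t_{j-1}}\}_{t_j\in D_m}$, each of which is a continuous linear functional on $\wiener_p^2$ (since $|\omega(t)-\omega(s)|\leq\|\omega\|_p$); hence $(Y_m)_T\in\mathcal{S}$, so in particular $(Y_m)_T\in\mathbb{D}^{k,q}$ for every $k$ and $q$. Because each $\mathbb{D}^{k,q}$ is complete and the operators $D^j$ are closable, it suffices to prove that for every fixed $k$ and $q$ the sequence $(Y_m)_T$ converges to $Y_T$ in $L^q$ together with all of its derivatives $D^1,\dots,D^k$ in $L^q(\rkhs^{\otimes j})$; the limit is then forced to be $Y_T$ with its Malliavin derivatives, whence $Y_T\in\mathbb{D}^{k,q}$.

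The computation is simplified by the finite-chaos structure of the components. Writing $Y_T=(B_T,A_T)$, the first two coordinates $B_T^1,B_T^2$ lie in the first Wiener chaos, so $DB_T^i$ are the constants $R_T^i$ and $D^jB_T^i=0$ for $j\geq 2$; the same holds for $(B_m)_T=B_T$. The area $A_T=\frac12 q(B,B)$, being the $L^2$-limit (Coutin--Qian) of the centered quadratic forms $(A_m)_T=\frac12 q(B_m,B_m)$, lies in the closed second chaos, so $D^2A_T$ is a deterministic element of $\rkhs^{\otimes 2}$ and $D^jA_T=0$ for $j\geq 3$; again the same holds for each $(A_m)_T$. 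Consequently, for each fixed $k$ only the terms $j=0,1,2$ in $\|(Y_m)_T-Y_T\|_{k,q}$ are nonzero.

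I would then dispatch these three terms in turn. For $j=0$, the pathwise bound $|(Y_m)_T|\leq C(1+\|B\|_p^2)$ — coming from $(B_m)_T=B_T$ and $|q(B_m,B_m)|\leq C\|B_m\|_p^2\leq C'\|B\|_p^2$ via Theorem~\ref{t.uniformpvar} — combined with Fernique's theorem gives uniform bounds in every $L^q$; together with the a.s.\ and $L^2$ convergence of Coutin--Qian, uniform integrability upgrades this to $L^q$ convergence for all $q$. For $j=1$, the convergence of $D(Y_m)_T$ to $DY_T$ in every $L^q$ is exactly the content of the Remark following Proposition~\ref{p.diff}. For $j=2$, only the area coordinate contributes, and both $D^2(A_m)_T$ and $D^2A_T$ are deterministic; here I would use the second-chaos isometry $\|D^2F\|_{\rkhs^{\otimes 2}}^2=2\,\E[F^2]$, valid for any centered element $F$ of the second chaos. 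Since $(A_m)_T-A_T$ is a centered second-chaos element, this yields $\|D^2(A_m)_T-D^2A_T\|_{\rkhs^{\otimes 2}}^2=2\,\E[((A_m)_T-A_T)^2]\to 0$.

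The main obstacle is precisely this last, Hilbert--Schmidt, convergence of the second derivative: a priori $Q$ is only known to be a bounded operator (by \eqref{e.qbounds}), and boundedness does not imply the Hilbert--Schmidt integrability needed for membership in $\mathbb{D}^{2,2}$. Recognizing $A_T$ as a genuine element of the second Wiener chaos is what converts the problematic operator-norm estimate into the clean isometry above and simultaneously supplies the required convergence. Collecting the three estimates shows $\|(Y_m)_T-Y_T\|_{k,q}\to 0$ for every $k$ and $q$, and completeness of $\mathbb{D}^{k,q}$ gives $Y_T\in\bigcap_{k,q}\mathbb{D}^{k,q}=\mathbb{D}^{\infty}$.
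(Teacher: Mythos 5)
Your overall architecture --- approximate by the cylinder functionals $(Y_m)_T$, use the finite-chaos structure to truncate the derivatives at order two, and invoke closability of $D$ together with completeness of $\mathbb{D}^{k,q}$ --- is sound, and your $j=1$ and $j=2$ steps are correct: the $j=1$ convergence is exactly the Remark following Proposition~\ref{p.diff}, and the second-chaos identity $\|D^2F\|_{\rkhs^{\otimes 2}}^2=2\,\E[F^2]$ disposes of the second derivative (you should phrase it as a Cauchy statement for the $D^2(A_m)_T$, since writing $D^2A_T$ before knowing $A_T\in\mathbb{D}^{2,2}$ is formally circular, but your closability setup makes this harmless). In this respect your write-up is actually more explicit than the paper's own proof, which only records finiteness of $\|Y_T\|_{1,j}$ after citing hypercontractivity and the same Remark.

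However, your $j=0$ step contains a genuine error. The claimed pathwise bound $|q(B_m,B_m)|\leq C\|B_m\|_p^2$ is not available: the Young estimate \eqref{e.yintbound} requires $\frac{1}{p}+\frac{1}{p}>1$, i.e.\ $p<2$, whereas \eqref{e.pvalue} forces $p>1/H>2$ here. Indeed, no such bound can hold at all: a piecewise linear path winding $n$ times around a circle of radius $\rho$ has L\'{e}vy area of order $n\rho^2$ but $p$-variation norm of order $\rho\, n^{1/p}$, so the ratio of area to squared $p$-variation grows like $n^{1-2/p}\rightarrow\infty$ when $p>2$. The failure of the area to be controlled pathwise by $p$-variation for $p>2$ is precisely the obstruction that makes rough path theory necessary in this regime, so this step cannot be repaired by any pathwise estimate plus Fernique. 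Fortunately the fix is already in your toolkit, and it is what the paper does: since each $(A_m)_T$ and $A_T$ lie in the (closed) second Wiener chaos, Nelson's hypercontractivity gives $\|(A_m)_T-A_T\|_{L^q}\leq C_q\|(A_m)_T-A_T\|_{L^2}\rightarrow 0$ for every $q\geq 1$, which is all the $j=0$ term requires (the first two coordinates are exact, since $(B_m)_T=B_T$). With that single replacement your proof goes through.
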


\begin{proof}

Corollaries 16 and 20 of \cite{coutin_qian} implies that $\mathbb{E}%
[|A_T|^{2}]<\infty$, and that $A_T=L^{2}-\lim\limits_{m\rightarrow\infty}%
\left(A_{m}\right)_T$. Hence, $A_T$ is in the second-order It\^{o} chaos; it follows from hypercontractivity (pp. 61-63 of \cite{nualart}, for example), that $\E[|A_T|^j] < \infty$ for all $1 \leq j < \infty$.  Combining this with the above remark, we find that
\begin{align*}
\Vert Y_T\Vert_{1,j}^{j}  &  =\mathbb{E}[|Y_T|^{j}]+\mathbb{E}\left[  \Vert
DY_T\Vert_{(\mathcal{H}^{2})^{\ast}}^{j}\right] \\
&  \leq\mathbb{E}[|B_T|^{j}]+\mathbb{E}[|A_T|^{j}] + \E\left[  \Vert
DY_T\Vert_{(\mathcal{H}^{2})^{\ast}}^{j}\right] < \infty.\\
\end{align*}
\end{proof}

\subsection{$(\det\gamma)^{-1}\in L^{\infty-}(\mathcal{W}^{2},\mathbb{P})$.}

We begin by recording some more general results, which will be useful in proving integrability of
$(\det\gamma)^{-j}$.

%
\begin{lemma}\label{l.expint}
Suppose that $X$ is a non-negative random variable such that, for each $j \geq 1$, there exists a constant $C_j > 0$ for which
\[
\E\left[e^{-sX}\right] \leq C_js^{-j}\quad\forall\, s \geq 1.
\]
Then $X^{-1} \in L^{\infty -}$.
\end{lemma}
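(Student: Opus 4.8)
The plan is to recover the negative moments of $X$ from its Laplace transform and then feed the decay hypothesis into the resulting integral. The whole argument rests on the elementary identity, valid for every real $x>0$ and every $p\geq 1$,
\[
x^{-p}=\frac{1}{\Gamma(p)}\int_0^\infty s^{p-1}e^{-sx}\,ds.
\]
Before applying it pointwise to $X$, I would first record that the hypothesis forces $X>0$ almost surely: if $\prob(X=0)>0$ then $\E[e^{-sX}]\geq\prob(X=0)>0$ for all $s$, contradicting $\E[e^{-sX}]\leq C_j s^{-j}\to 0$ as $s\to\infty$. Hence the identity holds for $X$ almost surely, giving
\[
X^{-p}=\frac{1}{\Gamma(p)}\int_0^\infty s^{p-1}e^{-sX}\,ds\quad\text{a.s.}
\]

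Next I would take expectations and interchange $\E$ with the $ds$-integral by Tonelli's theorem, which is legitimate since the integrand $s^{p-1}e^{-sX}$ is non-negative, obtaining
\[
\E[X^{-p}]=\frac{1}{\Gamma(p)}\int_0^\infty s^{p-1}\E[e^{-sX}]\,ds.
\]
I would then split the integral at $s=1$. On $[0,1]$ the trivial bound $\E[e^{-sX}]\leq 1$, valid because $X\geq 0$, yields $\int_0^1 s^{p-1}\E[e^{-sX}]\,ds\leq 1/p$. On $[1,\infty)$ I would apply the decay hypothesis with an index $j$ chosen strictly larger than $p$, say any integer $j>p$, so that
\[
\int_1^\infty s^{p-1}\E[e^{-sX}]\,ds\leq C_j\int_1^\infty s^{p-1-j}\,ds=\frac{C_j}{j-p}<\infty.
\]
Adding the two contributions shows $\E[X^{-p}]<\infty$.

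Since $p\geq 1$ was arbitrary, this gives $X^{-1}\in\bigcap_{p\geq 1}L^p=L^{\infty-}$, as claimed. There is no genuine obstacle here; the only point demanding attention is the order of quantifiers. The index $j$ in the decay assumption must be permitted to depend on $p$ (and be taken larger than $p$) in order to make the tail integral converge, which is precisely why the hypothesis is phrased for \emph{every} $j\geq 1$ rather than for a single fixed $j$. It is worth noting that a polynomial decay faster than every power is exactly what is needed, and that a bound of the form $\E[e^{-sX}]\leq C s^{-j_0}$ for one fixed $j_0$ would only yield $X^{-1}\in L^p$ for $p<j_0$.
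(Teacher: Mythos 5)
Your proof is correct and takes essentially the same route as the paper's: the identity $x^{-p}=\Gamma(p)^{-1}\int_0^\infty s^{p-1}e^{-sx}\,ds$, Tonelli's theorem, and then the decay hypothesis applied to the resulting integral. You are in fact more careful than the paper, which invokes the hypothesis at the same index $j$ as the moment being estimated (formally producing the divergent tail $\int_1^\infty s^{-1}\,ds$) and merely asserts finiteness; your observation that the hypothesis must be applied at an index strictly larger than $p$, together with the split at $s=1$, is precisely the detail that makes that final step rigorous.
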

\begin{proof}
Fix $j \geq 1$.  We note that for any $k \geq 0$,
\[
\int_0^\infty s^{j-1}e^{-ks}\;ds = k^{-j}\Gamma(j),
\]
where $\Gamma$ denotes the standard Gamma function.  By letting $k = X$, we find that
\[
\E[X^{-j}] = \frac{1}{\Gamma(j)}\E\left[\int_0^\infty s^{j-1}e^{-sX}\;ds\right] = \frac{1}{\Gamma(j)}\int_0^\infty s^{j-1}\E\left[e^{-sX}\right]\;ds.
\]
Using the assumption given, we can see that this quantity is clearly finite.
\end{proof}

\begin{theorem}
[{see Melcher \cite[pp.26-27]{melcher}}]\label{t.melcher} Let $(\wiener, \borel
,\mathbb{P})$ be a Gaussian measure space with associated Cameron-Martin space
$\mathcal{H}$, and suppose $\Phi:\wiener\times\wiener\rightarrow\mathbb{R}$ is a
bounded non-negative quadratic form. Then the operator $\hat{\Phi}:\mathcal{H}%
\rightarrow\mathcal{H}$ given by
\[
\Phi(h,k)=\langle\hat{\Phi}h,k\rangle_{\mathcal{H}}%
\]
is trace-class. In addition, if $\hat{\Phi}$ is not a finite rank
operator, then
\[
\Phi^{-1}\in L^{\infty-}(\wiener,\mathbb{P}).
\]

\end{theorem}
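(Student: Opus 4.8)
The plan is to reduce both assertions to a single structural fact: the diagonalization of $\hat{\Phi}$ combined with the Karhunen--Lo\`eve-type expansion of $\omega$ along the Cameron--Martin directions. I would fix an orthonormal basis $\{e_n\}$ of $\rkhs$ with $e_n = Jf_n$, where $\{f_n\}\subset\overline{\wiener^*}^{L^2(\prob)}$ is an orthonormal basis of the first chaos; the $f_n$ are then i.i.d.\ standard Gaussians under $\prob$, and by the It\^o--Nisio theorem the partial sums $S_N:=\sum_{n=1}^N f_n(\omega)e_n$ converge to $\omega$ both almost surely and in $L^2(\prob;\wiener)$. For the trace-class claim I would use that $\hat{\Phi}$ is self-adjoint and non-negative, so $\tr\hat{\Phi}=\sum_n\langle\hat{\Phi}e_n,e_n\rangle_\rkhs=\sum_n\Phi(e_n,e_n)$ is a series of non-negative terms. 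Since $\sum_{n\leq N}\Phi(e_n,e_n)=\E[\Phi(S_N,S_N)]\leq\Vert\Phi\Vert\,\E[\Vert S_N\Vert_\wiener^2]$, and $\E[\Vert S_N\Vert_\wiener^2]\to\E[\Vert\omega\Vert_\wiener^2]<\infty$ by Fernique's theorem, the partial sums stay bounded; letting $N\to\infty$ gives $\tr\hat{\Phi}<\infty$, hence $\hat{\Phi}$ is trace-class and in particular compact.

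Having secured compactness, I would diagonalize and assume $\{e_n\}$ is the eigenbasis, with eigenvalues $\lambda_n\geq0$ satisfying $\sum_n\lambda_n<\infty$. Then $\Phi(S_N,S_N)=\sum_{n\leq N}\lambda_n f_n(\omega)^2$, and passing to the limit using the continuity of $\Phi$, the convergence $S_N\to\omega$ in $\wiener$, and the monotonicity of the partial sums yields the key identity $\Phi(\omega,\omega)=\sum_n\lambda_n f_n(\omega)^2$ almost surely. By independence of the $f_n$ together with the elementary Gaussian computation $\E[e^{-ag^2}]=(1+2a)^{-1/2}$, the Laplace transform factorizes:
\[
\E\!\left[e^{-s\Phi(\omega,\omega)}\right]=\prod_n(1+2s\lambda_n)^{-1/2}.
\]

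The remaining step exploits the hypothesis that $\hat{\Phi}$ is not of finite rank. Fixing $j\geq1$, I would select $2j$ strictly positive eigenvalues $\lambda_{n_1},\ldots,\lambda_{n_{2j}}$ (possible exactly because infinitely many $\lambda_n$ are nonzero), discard the remaining factors (each $\leq1$), and apply $(1+2s\lambda)^{-1/2}\leq(2\lambda)^{-1/2}s^{-1/2}$ for $s\geq1$ to obtain $\E[e^{-s\Phi(\omega,\omega)}]\leq C_j s^{-j}$ with $C_j=\prod_{k=1}^{2j}(2\lambda_{n_k})^{-1/2}$. This is precisely the hypothesis of Lemma~\ref{l.expint} applied to $X=\Phi(\cdot,\cdot)$, whose conclusion delivers $\Phi^{-1}\in L^{\infty-}(\wiener,\prob)$.

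The main obstacle is the rigorous justification of the expansion $\Phi(\omega,\omega)=\sum_n\lambda_n f_n(\omega)^2$: although $\omega\notin\rkhs$ almost surely, one must push the bounded quadratic form $\Phi$ through a limit $S_N\to\omega$ that takes place in $\wiener$ rather than in $\rkhs$. This is what forces the combined use of the It\^o--Nisio theorem for $\wiener$-valued convergence and Fernique's theorem for the uniform $L^2$ control; once that identity is established, the product formula for the Laplace transform and the tail estimate are entirely routine.
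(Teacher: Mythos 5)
Your proposal is correct and follows essentially the same route as the paper: expand $\omega$ along a Cameron--Martin orthonormal basis with i.i.d.\ Gaussian coefficients (the paper cites Stroock's Theorem 5.3.32 where you invoke It\^o--Nisio), establish the trace bound via Fernique, diagonalize $\hat{\Phi}$, factorize the Laplace transform of $\sum_n\lambda_n\xi_n^2$, and conclude via Lemma~\ref{l.expint}. The only cosmetic differences are that the paper sidesteps the exact limit identity in the Laplace transform by using the monotone inequality $\Phi(B,B)\geq\sum_{n\leq N}\lambda_n\xi_n^2$ for each fixed $N$, and tracks the count $K_N$ of positive eigenvalues rather than selecting $2j$ of them outright.
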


\begin{proof}
%
By Theorem 5.3.32 of \cite{stroock_1993}, we have that for a
set of independent, identically distributed standard normal random variables $\{\xi
_{n}\}_{n=1}^{\infty}$, the series $B^N := \sum_{n=1}^{N}\xi_{n}%
h_{n}$ converges in $\wiener$ to $B$ $\mathbb{P}$-a.s. and in all $L^j, j \geq 1$ as $N\rightarrow
\infty$, and
\[
\text{Law}\left(  \sum_{n=1}^{\infty}\xi_{n}%
h_{n}\right) = \prob.
\]
In particular, the fact that $\E\|B^N - B\|^2_\wiener \rightarrow 0$ implies that 
\[
\E|\Phi(B^N, B^N)| \rightarrow \E|\Phi(B,B)|,
\]
and Fernique's theorem allows us to conclude that
\begin{align*}
\sum\limits_{n=1}^\infty \left\langle \hat{\Phi}h_n,h_n\right\rangle &= \lim\limits_{N\rightarrow\infty} \sum\limits_{n=1}^N \Phi(h_n,h_n)\\
&= \lim\limits_{N\rightarrow\infty}  \E\left[\Phi(B^N,B^N)\right]\\
&= \E[\Phi(B)] \leq C\E[\|B\|^2_\wiener] < \infty.\\
\end{align*}
Thus, $\hat{\Phi}$ is trace-class.

Suppose that $\hat{\Phi}$ is not finite rank.  Since $\hat{\Phi}$ is compact, there exists an orthonormal basis $\{h_{n}\}_{n=1}^{\infty}\subset\mathcal{H}$ for which $\hat{\Phi}h_{n}=\lambda_{n}h_{n}$; our assumption guarantees that $\#\{n:\lambda_n > 0\} = \infty$.  Using this, it is easy to check that
\[
\Phi(B^N,B^N) = \left\langle \hat{\Phi}B^N,B^N \right\rangle_\rkhs = \sum\limits_{n=1}^N \lambda_n\xi_n^2
\]
and so
\[
\Phi(B,B) = \lim\limits_{N \rightarrow \infty} \sum_{n=1}^{N}\lambda_{n}\xi_{n}^2.
\]
We will let $K_N := \#\{1\leq n\leq N:\lambda_n > 0\}$;  it is clear that $K_N \stackrel{N\rightarrow\infty}{\longrightarrow} \infty$.  Therefore, for each fixed $N$ and positive $s$,
\begin{align*}
\mathbb{E}\left[  \exp\left(  -s\Phi\left(B,B\right)\right)  \right]   &  =\mathbb{E}\left[
\exp\left(  -s\lim\limits_{N\rightarrow\infty}\sum\limits_{n=1}^{N}%
\lambda_{n}\xi_{n}^{2}\right)  \right] \\
&  \leq\mathbb{E}\left[  \exp\left(  -s\sum\limits_{n=1}^{N}\lambda_{n}%
\xi_{n}^{2}\right)  \right]\\
&\qquad = \prod\limits_{n=1}^N \left(\frac{1}{2\lambda_n s +1}\right)^{\frac{1}{2}} \leq C_{N}s^{-\frac{K_N}{2}}.
\end{align*}
Applying Lemma~\ref{l.expint} finishes the proof.
\end{proof}

In order to apply Theorem~\ref{t.melcher}, we will explicitly calculate a formula for the determinant of the Malliavin covariance matrix associated to $Y$.

\begin{lemma}
\label{blockdet} Given any $a\neq0$, $C\in M_{m,n}(\mathbb{R})$, and $D\in
M_{n}(\mathbb{R})$, one has that
\[
\det\left[
\begin{array}
[c]{cc}%
aI_{m} & C\\
C^{tr} & D
\end{array}
\right]  =a^{m}\left(  \det(D-a^{-1}C^{tr}C)\right),
\]
where $C^{tr}$ is the transpose of $C$.
\end{lemma}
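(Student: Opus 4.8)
The plan is to recognize this as the standard Schur-complement determinant identity and to prove it by exhibiting a block lower--upper factorization of the matrix. Since $a \neq 0$, the top-left block $aI_m$ is invertible with inverse $a^{-1}I_m$, and it is precisely this invertibility that legitimizes the block elimination below.

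First I would write down the factorization
\[
\left[
\begin{array}{cc} aI_m & C \\ C^{tr} & D \end{array}
\right]
=
\left[
\begin{array}{cc} I_m & 0 \\ a^{-1}C^{tr} & I_n \end{array}
\right]
\left[
\begin{array}{cc} aI_m & C \\ 0 & D - a^{-1}C^{tr}C \end{array}
\right],
\]
and verify it by multiplying out the four blocks on the right-hand side. The top row is immediate; the bottom-left block collapses to $C^{tr}$ because $a^{-1}C^{tr}\cdot aI_m = C^{tr}$, and the bottom-right block simplifies to $D$ because $a^{-1}C^{tr}\cdot C + \left(D - a^{-1}C^{tr}C\right) = D$. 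This confirms the identity of matrices.

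Next I would take determinants of both sides and invoke multiplicativity of the determinant. The first factor is block lower-triangular with identity blocks on the diagonal, so its determinant is $1$; the second factor is block upper-triangular, so its determinant is the product of the determinants of its diagonal blocks, namely $\det(aI_m)\det\left(D - a^{-1}C^{tr}C\right) = a^m\det\left(D - a^{-1}C^{tr}C\right)$. Multiplying the two determinants yields the claimed formula.

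There is no genuine obstacle here: the result is a routine application of block Gaussian elimination, and the hypothesis $a\neq 0$ is exactly what is needed to form the entry $a^{-1}C^{tr}$ that clears the lower-left block. The only point requiring any care is the explicit verification of the block multiplication, which I would carry out term by term; alternatively, one could simply cite the general Schur-complement identity for a $2\times 2$ block matrix with invertible upper-left block and specialize it to the case at hand.
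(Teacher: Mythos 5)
Your proof is correct and follows essentially the same route as the paper: a block LU factorization exhibiting the Schur complement $D - a^{-1}C^{tr}C$, followed by multiplicativity of the determinant. The only cosmetic difference is that the paper places the block $aI_m$ in the lower-triangular factor rather than the upper-triangular one, which changes nothing of substance.
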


\begin{proof}
This claim follows immediately when one writes
\[
\left[
\begin{array}
[c]{cc}%
aI_{m} & C\\
C^{tr} & D
\end{array}
\right]  =\left[
\begin{array}
[c]{cc}%
aI_{m} & 0\\
C^{tr} & I_{n}%
\end{array}
\right]  \left[
\begin{array}
[c]{cc}%
I_{m} & a^{-1}C\\
0 & D-a^{-1}C^{tr}C
\end{array}
\right]  .
\]

\end{proof}
\begin{proposition}
\label{p.mal}Define the map $\gamma:\wiener_p^2 \rightarrow M^3(\reals)$ in the following manner:
\[
\gamma(\omega):=\left[
\begin{array}
[c]{cc}%
T^{2H}I_{2} &  \left(Q\omega\right)(T)\\
\left[\left(Q\omega(T)\right)\right]^{tr} & \Vert Q\omega\Vert_{\mathcal{H}^{2}%
}^{2}%
\end{array}
\right]
\]
Also, define the quadratic form $\Phi$ on $\wiener_p^2$ as follows:
\[
\Phi(\omega)=T^{4H}\Vert Q\omega\Vert_{\mathcal{H}^{2}}^{2}-T^{2H}|Q\omega(T)|^{2}.
\]
Then
\begin{enumerate}
\item $DY_T(DY_T)^* = \gamma$ a.s.
\item $\Phi = \det \gamma$.
\end{enumerate}
\end{proposition}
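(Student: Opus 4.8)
The plan is to treat the two claims separately: claim (1) follows from a direct computation of the Malliavin covariance matrix using the explicit form of $DY_T$ obtained in Proposition~\ref{p.diff}, while claim (2) follows from a single application of Lemma~\ref{blockdet}.

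For claim (1), I would first recast $DY_T$ in terms of its Riesz representatives. By Proposition~\ref{p.diff}, for $h \in \rkhs^2$ we have $DY_T h = (h^1(T), h^2(T), \langle QB, h\rangle_{\rkhs^2})$. Using the reproducing kernel identity $h^i(T) = \langle R(T,\cdot)e_i, h\rangle_{\rkhs^2}$, each of the three real-valued components of $DY_T$ is an inner product against a fixed element of $\rkhs^2$; writing $r_1 := R(T,\cdot)e_1$, $r_2 := R(T,\cdot)e_2$, and $r_3 := QB$, the adjoint $(DY_T)^* : \reals^3 \to \rkhs^2$ acts by $(DY_T)^* v = \sum_{i=1}^{3} v_i r_i$. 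Consequently the $(i,j)$ entry of $\gamma = DY_T (DY_T)^*$ is exactly $\langle r_i, r_j \rangle_{\rkhs^2}$.

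I would then evaluate these inner products. The diagonal entries in the upper block are $\langle r_1, r_1 \rangle = \langle r_2, r_2 \rangle = R(T,T) = T^{2H}$, while $\langle r_1, r_2 \rangle = 0$ by orthogonality of $e_1, e_2$, yielding the block $T^{2H} I_2$. The coupling entries are computed via the reproducing property applied componentwise: $\langle r_i, r_3 \rangle = \langle R(T,\cdot), (QB)^i \rangle_{\rkhs} = (QB)^i(T)$ for $i = 1, 2$, which together give the off-diagonal column $(QB)(T)$; finally $\langle r_3, r_3 \rangle = \|QB\|^2_{\rkhs^2}$. Since $B_t(\omega) = \omega(t)$ forces $QB = Q\omega$ as random elements of $\rkhs^2$, this reproduces the matrix $\gamma(\omega)$ exactly, giving claim (1) almost surely.

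Claim (2) is then immediate: I would apply Lemma~\ref{blockdet} with $a = T^{2H}$, $m = 2$, $n = 1$, $C = (Q\omega)(T)$, and $D = \|Q\omega\|^2_{\rkhs^2}$, obtaining $\det\gamma = (T^{2H})^2\big(\|Q\omega\|^2_{\rkhs^2} - T^{-2H}|(Q\omega)(T)|^2\big) = T^{4H}\|Q\omega\|^2_{\rkhs^2} - T^{2H}|(Q\omega)(T)|^2 = \Phi(\omega)$. There is no genuine obstacle here; the only point requiring minor care is the bookkeeping in forming the adjoint and in applying the reproducing kernel identity to the individual components of the vector-valued element $QB$, together with checking that the contraction factor $T^{-2H}$ coming from the Schur complement combines correctly with the prefactor $T^{4H}$.
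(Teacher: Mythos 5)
Your proposal is correct and follows essentially the same route as the paper: both compute the adjoint $(DY_T)^*$ via the Riesz representatives $R(T,\cdot)e_1$, $R(T,\cdot)e_2$, $QB$ (your Gram-matrix bookkeeping $\gamma_{ij}=\langle r_i,r_j\rangle_{\rkhs^2}$ is just a tidier organization of the paper's direct evaluation of $DY_T(DY_T)^*\mathbf{x}$), and both obtain part (2) by the same application of Lemma~\ref{blockdet} with $a=T^{2H}$, $m=2$, $n=1$.
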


\begin{proof}
\begin{enumerate}

\item[1.] We begin by calculating the adjoint operator $(DY)^{\ast}$.  

Recall that $R_{t}^{i}h=h^{i}(t)e_i.$  For a fixed $\lambda=(\lambda^{1},\lambda^{2})\in\mathbb{R}^{2}$, $h \in \rkhs^2$, and $i=1,2$,
\begin{align*}
\left\langle (R_{T}^{i})^{\ast}\lambda, h \right\rangle_{\rkhs^2} &= \lambda \cdot h^{i}(T)e_i\\
&= \lambda^ih^i(T) = \left\langle \lambda^iR(T,\cdot)e_i, h\right\rangle_{\rkhs^2}.
\end{align*}
Thus, one has that
\[
R_{T}^{i}(R_{T}^{j})^{\ast}\lambda=R_{T}^{i}\lambda
^{j}R(T,\cdot)e_{j}=\delta_{ij}T^{2H}\lambda^{j}.
\]
Suppose $\mathbf{x}\in\mathbb{R}^{3}$.
\begin{align*}
\langle(DY_T)^{\ast}\mathbf{x},k\rangle &  =\mathbf{x}\cdot
DY_Tk\\
&  =\mathbf{x}\cdot(k(T),\langle QB,k\rangle)\\
&  =(x^{1},x^{2})\cdot k(T)+x^{3}(\langle QB,k\rangle)\\
&  =\langle(R_{T}^{1})^{\ast}(x^{1},x^{2})+(R_{T}^{2})^{\ast}(x^{1}%
,x^{2})+x^{3}QB,k\rangle\\
&  =\langle x^{1}R(T,\cdot)e_{1}+x^{2}R(T,\cdot)e_{2}+x^{3}QB,k\rangle.
\end{align*}
Using this, we may now verify the claim:
\begin{align*}
DY_T(DY_T)^*\mathbf{x}  &  =DY_T\left(  x^{1}R(T,\cdot)e_{1}%
+x^{2}R(T,\cdot)e_{2}+x^{3}QB\right) \\
&  =DY_T(x^{1}R(T,\cdot)e_{1})+DY_T(x^{2}R(T,\cdot)e_{2}%
)+DY_T(x^{3}QB)\\
&  =(T^{2H}x^{1},0,\langle QB,x^{1}R(T,\cdot)e_{1}\rangle)\\
&  \quad+(0,T^{2H}x^{2},\langle QB,x^{2}R(T,\cdot)e_{2}\rangle)\\
&  \quad+(x^{3}QB^{1}(T),x^{3}QB^{2}(T),\langle QB,x^{3}QB\rangle)\\
&  =\left[
\begin{array}
[c]{cc}%
T^{2H}I_{2} & QB(T)\\
\left(  QB(T)\right)  ^{tr} & \Vert QB\Vert_{\mathcal{H}^{2}}^{2}%
\end{array}
\right]  \mathbf{x}.
\end{align*}

\item[2.] This follows immediately from Lemma~\ref{blockdet}.
\end{enumerate}
\end{proof}

%

\begin{lemma}
\label{detkernel} The quadratic form $\Phi$ is positive semidefinite and has a trivial nullspace.
\end{lemma}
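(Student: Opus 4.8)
The plan is to start from the identity $\Phi = \det\gamma$ supplied by Proposition~\ref{p.mal} and rewrite it in a shape built for Cauchy--Schwarz. Factoring out $T^{2H}$ and splitting the Euclidean norm into its two components gives
\[
\Phi(\omega) = T^{2H}\sum_{i=1}^2\Big(T^{2H}\big\|(Q\omega)^i\big\|_{\rkhs}^2 - \big|(Q\omega)^i(T)\big|^2\Big).
\]
Since $Q\omega \in \rkhs^2$ for every $\omega\in\wiener_p^2$ by Proposition~\ref{Qprops}, each $(Q\omega)^i$ lies in $\rkhs$, so the reproducing kernel property yields $(Q\omega)^i(T) = \langle (Q\omega)^i, R(T,\cdot)\rangle_{\rkhs}$ together with $\|R(T,\cdot)\|_{\rkhs}^2 = R(T,T) = T^{2H}$. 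Cauchy--Schwarz then gives $|(Q\omega)^i(T)|^2 \le T^{2H}\|(Q\omega)^i\|_{\rkhs}^2$, so every summand is non-negative and $\Phi \ge 0$. This settles positive semidefiniteness.

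For the nullspace the relevant object is the kernel of the operator $\hat\Phi$ on $\rkhs^2$ in the sense of Theorem~\ref{t.melcher}; since $\Phi$ is non-negative, $\hat\Phi$ is injective if and only if $\{h\in\rkhs^2 : \Phi(h)=0\}=\{0\}$, and this is what I would show. Equality in the Cauchy--Schwarz step above holds exactly when $(Qh)^i$ is a scalar multiple of $R(T,\cdot)$, so $\Phi(h)=0$ if and only if $Qh \in \mathrm{span}\{R(T,\cdot)e_1, R(T,\cdot)e_2\}$. Writing $c := T^{-2H}(Qh)(T)\in\reals^2$, this is equivalent to the requirement that $q(h,k) = \langle Qh,k\rangle_{\rkhs^2} = c\cdot k(T)$ for every $k\in\rkhs^2$.

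I would then extract pointwise information by testing this identity against $k=(k^1,0)$ and $k=(0,k^2)$ separately. Using the second expression for $q$ in \eqref{qhk}, each choice collapses to a relation of the form $\int_0^T \alpha\,d\beta = \mu\,\beta(T)$ for all $\beta\in\rkhs$, with $\alpha\in\{h^1,h^2\}$ and $\mu$ an explicit constant built from $c$ and $h(T)$. Invoking property (2) of Proposition~\ref{p.yiinh} for the operator $M\alpha := \int_0^T \alpha(t)\,R(dt,\cdot)$, namely $\langle M\alpha,\beta\rangle_{\rkhs} = \int_0^T \alpha\,d\beta$, this reads $M h^i = \lambda_i R(T,\cdot)$ for suitable scalars $\lambda_i$. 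Because $M\one = R(T,\cdot)$ (a telescoping computation, legitimate since $M$ extends to integrands of finite $p$-variation by Proposition~\ref{p.yiinh}), this is the same as $M(h^i - \lambda_i)=0$.

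The crux, and the step I expect to be the main obstacle, is to conclude that $M$ is injective on continuous paths of finite $p$-variation, so that $M(h^i-\lambda_i)=0$ forces $h^i\equiv\lambda_i$; the boundary condition $h^i(0)=0$ then gives $\lambda_i=0$ and hence $h=0$. When $H=\half$ the operator $M$ is ordinary integration and injectivity is immediate, but for $H<\half$ the kernel $R(dt,\cdot)$ is singular, and injectivity must be read off from the explicit Volterra representation $h(t)=\int_0^t K_H(t,s)\hat h(s)\,ds$ together with the isometry $\rkhs\cong L^2[0,T]$; this reduction of $M$ to the $L^2$ picture is the genuinely analytic part of the argument and the place where the value $H<\half$ must be used carefully.
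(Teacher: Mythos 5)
Your semidefiniteness argument is exactly the paper's (reproducing property of $R(T,\cdot)$ plus Cauchy--Schwarz), and your reduction of the nullspace question also tracks the paper: equality in Cauchy--Schwarz forces $Qh=\mathbf{c}\,R(T,\cdot)$, and after subtracting $M\one=R(T,\cdot)$ this becomes $M(h^i-\lambda_i)=0$, i.e. $\int_0^T\bigl(h^i(t)-\lambda_i\bigr)\,d\beta(t)=0$ for all $\beta\in\rkhs$. But your proof stops exactly where it needs to close. You name the injectivity of $M$ (modulo constants) as ``the crux'' and then defer it to an unexecuted program involving the Volterra kernel $K_H$ and the isometry $\rkhs\cong L^2[0,T]$. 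That deferral is a genuine gap: nothing in the proposal establishes that $\int_0^T\alpha\,d\beta=0$ for all $\beta\in\rkhs$ forces $\alpha$ to be constant, and the fractional-calculus route you gesture at (pushing $M$ through $K_H$ and inverting the resulting kernel operator on $L^2$) is substantial work that is not carried out; it is plausible but much heavier than what is actually needed.

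The paper closes this step with a soft observation that makes the whole difficulty evaporate: by Lemma 31 of \cite{friz_victoir}, $\mathcal{C}_c^{\infty}(0,T)\subset\rkhs$. Testing the identity against smooth compactly supported $\beta$, the Young integral is just $\int_0^T\alpha(t)\beta'(t)\,dt=0$, so the distributional derivative of $\alpha$ vanishes, hence $\alpha$ is constant by continuity; the condition $h^i(0)=0$ then gives $h=0$. No Volterra representation or use of $H<\half$ is required at this stage. Two smaller points: (i) you prove triviality of the nullspace only on $\rkhs^2$, whereas $\Phi$ is a form on all of $\wiener_p^2$ and the paper's argument works verbatim for any $\omega\in\wiener_p^2$; your weaker statement does still suffice for Corollary~\ref{c.int}, since injectivity of $\hat\Phi$ on the infinite-dimensional space $\rkhs^2$ already rules out finite rank in Theorem~\ref{t.melcher}. (ii) Your use of $M\one=R(T,\cdot)$ needs the one-line extension of Proposition~\ref{p.yiinh} to constant integrands (constants are not in $\wiener_p$ as defined), which you correctly flag and which is harmless.
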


\begin{proof}
The Cauchy-Schwarz inequality allows us to see that 
\[
|(Q\omega)^i(T)|^2 = |\langle  R(T,\cdot),(Q\omega)^i \rangle_\rkhs|^2 \leq \|R(T,\cdot)\|_\rkhs^2\|(Q\omega)^i\|_\rkhs^2 = T^{2H}\|(Q\omega)^i\|_\rkhs^2.
\]
Hence, $\Phi$ is non-negative and $\Phi=0$ if and only if $Q\omega=\mathbf{c}%
R(T,\cdot)$ for some constant vector $\mathbf{c}\in\mathbb{R}^{2}$. By the
definition of $Q$, it would then follow that
\[
0=\int_{0}^{T}\left(  \tilde{\omega}(t)-\tilde{\omega}(T)+\mathbf{c}\right)
\;R(dt,\cdot)
\]
which implies by Proposition~\ref{p.yiinh} that
\[
0=\int_{0}^{T}\left(  \tilde{\omega}(t)-\tilde{\omega}(T)+\mathbf{c}\right)
\cdot dh(t)\quad\left(  \forall\,h\in\mathcal{H}^2\right)  .
\]
As a result of Lemma 31 of \cite{friz_victoir}, one has that $\mathcal{C}_{c}^{\infty
}(0,T)\subset\mathcal{H}$.  Hence, we can conclude that $(\tilde{\omega}(t)-\tilde
{\omega}(T))+\mathbf{c}$ is constant on $[0,T]$, which implies that $\tilde{\omega}(t)$
is constant as well. Thus $\omega(t)=\omega(0)=0$ for all $t \in [0,T]$.
\end{proof}

\begin{corollary}\label{c.int}
$(\Phi)^{-1} \in L^{\infty-}(\mathcal{W}^{2}_p,\mathbb{P})$.
\end{corollary}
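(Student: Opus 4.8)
The plan is to obtain the conclusion as a direct application of Theorem~\ref{t.melcher} to the quadratic form $\Phi$ introduced in Proposition~\ref{p.mal}, taking for the Gaussian space the restricted space $(\wiener_p^2,\borel_{\wiener_p^2},\prob|_{\wiener_p^2})$ whose Cameron-Martin space is $\mathcal{H}^2$. Thus the whole task reduces to verifying the two hypotheses of that theorem: that $\Phi$ is a bounded, non-negative quadratic form on $\wiener_p^2$, and that the associated operator $\hat\Phi$ on $\mathcal{H}^2$ is not of finite rank.

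Non-negativity is already supplied by Lemma~\ref{detkernel}. For boundedness I would polarize $\Phi(\omega)=T^{4H}\|Q\omega\|_{\mathcal{H}^2}^2-T^{2H}|Q\omega(T)|^2$ into a symmetric bilinear form via $4\Phi(\omega,\eta)=\Phi(\omega+\eta)-\Phi(\omega-\eta)$, and then control each term. Proposition~\ref{Qprops} gives that $Q$ is bounded from $\wiener_p^2$ into $\mathcal{H}^2$, so $\|Q\omega\|_{\mathcal{H}^2}^2\leq C\|\omega\|_p^2$; the evaluation term is dominated by the norm term through the Cauchy--Schwarz estimate $|Q\omega(T)|^2\leq T^{2H}\|Q\omega\|_{\mathcal{H}^2}^2$ already recorded in the proof of Lemma~\ref{detkernel}. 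Combining these yields $0\leq\Phi(\omega)\leq T^{4H}\|Q\omega\|_{\mathcal{H}^2}^2\leq C\|\omega\|_p^2$, so $\Phi$ is bounded.

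The substantive step is to show $\hat\Phi$ is not finite rank, and here I would use that for a non-negative form the kernel of the operator coincides with the nullspace of the form. Indeed $\hat\Phi h=0$ forces $\Phi(h,h)=\langle\hat\Phi h,h\rangle_{\mathcal{H}^2}=0$, while conversely $\Phi(h,h)=0$ together with the Cauchy--Schwarz inequality for the non-negative bilinear form $\Phi(\cdot,\cdot)$ forces $\Phi(h,k)=0$ for every $k$, hence $\hat\Phi h=0$. Since $\mathcal{H}^2\subset\wiener_p^2$, Lemma~\ref{detkernel} shows this common nullspace is trivial. As $\mathcal{H}^2$ is infinite-dimensional and $\hat\Phi$ is self-adjoint and compact (in fact trace-class, by the first conclusion of Theorem~\ref{t.melcher}), an operator with trivial kernel cannot have finite rank: a finite-rank self-adjoint operator has finite-dimensional range and hence, since its kernel is the orthogonal complement of that range, an infinite-dimensional kernel.

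With both hypotheses in place, the second conclusion of Theorem~\ref{t.melcher} gives $\Phi^{-1}\in L^{\infty-}(\wiener_p^2,\mathbb{P})$, which is precisely the assertion. I expect the only point requiring care to be the boundedness verification, in particular confirming that the polarized form inherits a uniform bound from Proposition~\ref{Qprops}; the non-finite-rank argument is then a soft consequence of the trivial-nullspace statement of Lemma~\ref{detkernel} together with the infinite dimensionality of $\mathcal{H}^2$.
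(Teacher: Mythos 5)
Your proposal is correct and takes essentially the same route as the paper: the paper's entire proof is ``Combining Lemma~\ref{detkernel} and Theorem~\ref{t.melcher} gives us the desired result,'' and your write-up simply makes explicit the verifications that combination requires (boundedness of $\Phi$ via Proposition~\ref{Qprops} and Cauchy--Schwarz, non-negativity and trivial nullspace via Lemma~\ref{detkernel}, and the observation that a self-adjoint operator with trivial kernel on the infinite-dimensional space $\mathcal{H}^2$ cannot be finite rank).
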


\begin{proof}
Combining Lemma~\ref{detkernel} and
Theorem~\ref{t.melcher} gives us the desired result.
\end{proof}

\section{Appendix}

\subsection{Two-Dimensional $r$-Variation for Covariance of Fractional Brownian Motion}

Fix $0 < H < 1/2$ and $T > 0$.  We will, as usual, denote by $R:[0,T]^2 \longrightarrow \reals$ the covariance function for fractional Brownian motion with Hurst parameter $H$ on $[0,T]$; that is,
\[
\E[B^H_sB^H_t] = R(s,t) := \half\left[s^{2H}+t^{2H} - |t-s|^{2H}\right].
\]
To this covariance function, we may associate a finitely addivite signed measure $\mu_R$ on the algebra generated by rectangles of the form 

\noindent $\{(a,b]\times(c,d] \subset (0,T]^2\}$ such that
\begin{align*}
\mu_R((a,b]\times(c,d]) &= R(b,d)-R(a,d)-R(b,c)+R(a,c)\\
& = \half\left(|d-a|^{2H}+|c-b|^{2H}-|d-b|^{2H}-|c-a|^{2H}\right).\\
\end{align*}
It is easy to check the covariance of the process increments $B^H_b - B^H_a$ and $B^H_d - B^H_c$ is given by the $\mu_R$-measure of the rectangle $(a,b]\times(c,d]$, or
\[
\E[(B^H_d - B^H_c)(B^H_b - B^H_a)] = \mu_R((a,b]\times(c,d]).
\]
\begin{lemma}\label{l.negativecorr}
Suppose $0 \leq a < b < c < d \leq T$.  Then the process increments $B^H_d - B^H_c$ and $B^H_b - B^H_a$ are negatively correlated; i.e., $\mu_R\left((a,b]\times(c,d]\right)<0$.
\end{lemma}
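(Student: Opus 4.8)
The plan is to reduce the assertion $\mu_R\left((a,b]\times(c,d]\right) < 0$ to a strict-concavity statement for the function $f(x) := x^{2H}$ on $(0,\infty)$. Since $0 \leq a < b < c < d \leq T$, each of the four differences $d-a$, $c-b$, $d-b$, $c-a$ is strictly positive, so the absolute values appearing in the formula for $\mu_R$ may be dropped. Thus it suffices to prove the strict inequality
\begin{equation*}
(d-a)^{2H} + (c-b)^{2H} < (d-b)^{2H} + (c-a)^{2H}.
\end{equation*}

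Next I would change variables to expose the underlying geometry. Writing $u := c - b$, $h_1 := b - a$, and $h_2 := d - c$ — all strictly positive by hypothesis — one checks directly that $c - b = u$, $c - a = u + h_1$, $d - b = u + h_2$, and $d - a = u + h_1 + h_2$. The desired inequality then becomes
\begin{equation*}
f(u + h_1 + h_2) + f(u) < f(u + h_1) + f(u + h_2),
\end{equation*}
which, after rearranging, is the comparison of increments
\begin{equation*}
f(u + h_1 + h_2) - f(u + h_1) < f(u + h_2) - f(u).
\end{equation*}
This exhibits the claim as the statement that the increment of $f$ over the interval $[u+h_1,\, u+h_1+h_2]$ is strictly smaller than the increment over the leftward-shifted interval $[u,\, u+h_2]$ of the same length $h_2$.

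Finally I would invoke strict concavity. Since $0 < H < \half$, we have $2H - 1 < 0$, so $f'(x) = 2H x^{2H-1}$ is strictly decreasing on $(0,\infty)$. Writing the difference of the two sides as
\begin{align*}
&\left[f(u+h_2) - f(u)\right] - \left[f(u+h_1+h_2) - f(u+h_1)\right]\\
&\qquad = \int_0^{h_2}\left(f'(u+s) - f'(u+h_1+s)\right)\,ds,
\end{align*}
the integrand is strictly positive for every $s \in (0,h_2)$ because $f'$ is strictly decreasing and $h_1 > 0$, whence the integral is strictly positive and the claim follows. The argument is entirely elementary; the only point requiring genuine care is the \emph{strictness} of the inequality, which rests precisely on the hypotheses $H \neq \half$ (which yields strict concavity, hence $f'$ strictly decreasing) together with $a < b$ and $c < d$ (which yield $h_1, h_2 > 0$). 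Were any of these degeneracies permitted, the correlation could vanish rather than be strictly negative.
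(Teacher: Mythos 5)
Your proof is correct, and while it rests on the same underlying fact as the paper's — strict concavity of $x \mapsto x^{2H}$ for $0<H<\half$ — the execution is genuinely different. Both arguments first reduce the claim to the strict inequality $(d-a)^{2H}+(c-b)^{2H} < (d-b)^{2H}+(c-a)^{2H}$. The paper then notes that the pairs $(d-a,c-b)$ and $(d-b,c-a)$ have equal sums, with the first pair more spread out, and invokes (without proof, as a "one can check" step) the fact that $\alpha^{2H}+\beta^{2H}$ restricted to the line $\alpha+\beta=C$, $\alpha,\beta\geq 0$, is maximized at the midpoint and decreases as the pair spreads apart — essentially a majorization argument. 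You instead reparametrize by $u=c-b$, $h_1=b-a$, $h_2=d-c$, recast the inequality as comparing increments of $f(x)=x^{2H}$ over two intervals of common length $h_2$, and settle it by writing the difference as $\int_0^{h_2}\left(f'(u+s)-f'(u+h_1+s)\right)ds$ with $f'$ strictly decreasing. Your route is self-contained precisely where the paper's proof leaves the key monotonicity claim as an exercise, and it isolates exactly which hypotheses produce strictness ($H\neq\half$ and $h_1,h_2>0$); the paper's version is shorter and keeps the symmetric structure of the four points in view. One incidental point worth noting: your integral representation uses $f'$ on $[u,u+h_2]$, which requires $u>0$; this holds since $b<c$, and even in the degenerate case $u=0$ the integral would still converge because $2H>0$, so there is no gap.
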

\begin{proof}
We begin by noting that the following relations hold:
\begin{enumerate}
\item $(d-a)+(c-b) = (d-b)+(c-a)$.
\item $(d-a) > \left[(d-b) \vee (c-a)\right] := \max\left[(d-b),(c-a)\right]$.
\item $(c-b) < \left[(d-b) \wedge (c-a)\right] :=\min\left[(d-b),(c-a)\right]$.
\end{enumerate}
Let $f(\alpha,\beta) = \alpha^{2H}+ \beta^{2H}$.  Then for any positive constant $C$, one can check that on the region $\{(\alpha, \beta): \alpha \geq 0, \beta \geq 0, \alpha + \beta = C\}$, $f$ has a maximum at $(\frac{C}{2}, \frac{C}{2})$ and decreases as either $\alpha$ or $\beta$ are increased.  Thus, $f(d-a,c-b) < f(d-b,c-a)$, and the claim is proven.
\end{proof}
Throughout the sequel, we will make use of the fact that for $H < \half$, one has the inequalities
\begin{align*}
(x+y)^{2H} - x^{2H} &\leq y^{2H}\\
x^{\frac{1}{2H}} + y^{\frac{1}{2H}} &\leq (x+y)^{\frac{1}{2H}}\\
\end{align*}
for all $x,y\geq 0$.
\begin{lemma}\label{l.msrbound}
Let $\mu_R$ be the measure defined above.  Then for any intervals $(a,b], (c,d] \subset [0,T]$, one has the bound
\[
\left|\mu_R((a,b]\times(c,d]) \right| \leq (b-a)^{2H}\wedge (d-c)^{2H}.
\]
\end{lemma}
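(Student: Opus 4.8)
The plan is to collapse the four-term expression for $\mu_R$ into two ``double differences'' of the single-variable map $x\mapsto |x|^{2H}$ and then to exploit that this map is $2H$-H\"older continuous with constant $1$. Concretely, I would first regroup the defining formula as
\[
2\mu_R\bigl((a,b]\times(c,d]\bigr)=\bigl(|d-a|^{2H}-|c-a|^{2H}\bigr)-\bigl(|d-b|^{2H}-|c-b|^{2H}\bigr),
\]
so that each parenthesized block has the form $|d-t|^{2H}-|c-t|^{2H}$ with the common shift $t\in\{a,b\}$, and in both blocks the two evaluation points $s=d-t$ and $r=c-t$ satisfy $s-r=d-c$.

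The second step is the elementary estimate that for all $s,r\in\reals$,
\[
\bigl||s|^{2H}-|r|^{2H}\bigr|\le|s-r|^{2H}.
\]
This is exactly the $2H$-H\"older continuity of $x\mapsto|x|^{2H}$ with unit constant, and it follows directly from the subadditivity inequality $(x+y)^{2H}\le x^{2H}+y^{2H}$ recorded just before the statement. When $s$ and $r$ have the same sign one has $|s-r|=\bigl||s|-|r|\bigr|$, and assuming $|s|\ge|r|$ the relation $|s|=|r|+(|s|-|r|)$ together with subadditivity gives $|s|^{2H}-|r|^{2H}\le(|s|-|r|)^{2H}=|s-r|^{2H}$. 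When $s$ and $r$ have opposite signs one instead uses $\bigl||s|^{2H}-|r|^{2H}\bigr|\le\max(|s|,|r|)^{2H}\le(|s|+|r|)^{2H}=|s-r|^{2H}$.

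Applying this estimate to each block above, with $s-r=d-c$ in both, and invoking the triangle inequality yields $2|\mu_R|\le 2(d-c)^{2H}$, hence $|\mu_R|\le(d-c)^{2H}$. To finish, I would observe that the defining formula for $\mu_R$ is symmetric under interchanging the two intervals $(a,b]$ and $(c,d]$, so the identical argument with the roles of the pairs $\{a,b\}$ and $\{c,d\}$ reversed produces $|\mu_R|\le(b-a)^{2H}$. Combining the two bounds gives $|\mu_R|\le(b-a)^{2H}\wedge(d-c)^{2H}$, which is the claim.

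The only genuinely nontrivial point is the constant-one H\"older estimate, and within it the subcase where the fixed shift $t$ lies strictly between $c$ and $d$, so that $d-t$ and $c-t$ have opposite signs; this is precisely the opposite-sign case above and must be treated separately from bare subadditivity. Everything else is bookkeeping. An alternative, more pedestrian route would be a direct case analysis over the relative positions of the two intervals (disjoint, partially overlapping, and nested), bounding $2\mu_R$ from above and below in each configuration via the stated subadditivity and monotonicity of $t\mapsto t^{2H}$; this avoids the H\"older lemma but multiplies the number of cases, and the overlapping configurations are where one must be most careful about signs.
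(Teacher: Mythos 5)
Your proposal is correct, and it takes a genuinely different route from the paper. The paper proves the bound by a case analysis on the relative position of the two intervals (nested, partially overlapping, disjoint), and in each configuration regroups the four terms of $2\mu_R$ in an ad hoc way so that the subadditivity inequality $(x+y)^{2H}-x^{2H}\le y^{2H}$ applies term by term; the overlapping and disjoint cases each require two separate regroupings to extract both the $(b-a)^{2H}$ and the $(d-c)^{2H}$ bounds. You instead fix a single regrouping,
\[
2\mu_R\bigl((a,b]\times(c,d]\bigr)=\bigl(|d-a|^{2H}-|c-a|^{2H}\bigr)-\bigl(|d-b|^{2H}-|c-b|^{2H}\bigr),
\]
and reduce everything to the constant-one H\"older estimate $\bigl||s|^{2H}-|r|^{2H}\bigr|\le|s-r|^{2H}$, valid for all real $s,r$; the sign dichotomy inside that estimate (same sign versus opposite signs) is exactly what absorbs the paper's positional case analysis, the opposite-sign case corresponding to the paper's overlapping configuration, where the shift point $t=b$ lies strictly between $c$ and $d$. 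Your symmetry remark then yields the second bound for free, whereas the paper re-derives it by a second regrouping within each case. Both arguments rest on the same elementary inequality recorded before the lemma, but yours packages it once and for all: it is shorter, insensitive to the ordering of the intervals, and isolates the one analytic fact being used. The paper's version keeps the interval geometry explicit, which is stylistically consistent with the neighboring arguments (Lemma~\ref{l.negativecorr} and the telescoping in Theorem~\ref{t.Rvar}) but buys nothing extra here, since the constants produced are identical.
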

\begin{proof}
We will need to consider three possible cases:
\begin{itemize}
\item[(1)] One interval is nested within the other,
\item[(2)] the intervals partially overlap, or 
\item[(3)] the intervals are disjoint.
\end{itemize} 
\begin{case}{$a \leq c < d \leq b$.}\end{case}
In this scenario, the claimed upper bound is clearly $(d-c)^{2H}$.    Using this, we have that
\begin{align*}
\left|\mu_R((a,b]\times(c,d]) \right| &= \half\left|(d-a)^{2H} + (b-c)^{2H} - (b-d)^{2H} - (c-a)^{2H}\right|\\
&\leq \half\left((d-a)^{2H} - (c-a)^{2H}\right)\\
&\qquad\qquad\quad + \half\left((b-c)^{2H} - (b-d)^{2H}\right)\\
&\leq (d-c)^{2H}.
\end{align*}
\begin{case}{$a < c \leq b < d$.}\end{case}
In this case, we know that
\begin{align*}
\left|\mu_R((a,b]\times(c,d]) \right| &= \half\left|(d-a)^{2H} + (b-c)^{2H} - (d-b)^{2H} - (c-a)^{2H}\right|\\
&\leq \half\left((d-a)^{2H} - (d-b)^{2H}\right)\\
&\qquad\qquad\quad + \half\left|(b-c)^{2H} - (c-a)^{2H}\right|\\
&\leq \half(b-a)^{2H} + \half\left((b-c)^{2H} \vee (c-a)^{2H}\right)\\
&\leq (b-a)^{2H}.
\end{align*}
In a similar manner,
\begin{align*}
\left|\mu_R((a,b]\times(c,d]) \right| &= \half\left|(d-a)^{2H} + (b-c)^{2H} - (d-b)^{2H} - (c-a)^{2H}\right|\\
&\leq \half\left((d-a)^{2H} - (c-a)^{2H}\right)\\
&\qquad\qquad\quad + \half\left|(b-c)^{2H} - (d-b)^{2H}\right|\\
&\leq \half(d-c)^{2H} + \half\left((b-c)^{2H} \vee (d-b)^{2H}\right)\\
&\leq (d-c)^{2H}.
\end{align*}
\begin{case}{$a < b \leq c < d$.}\end{case}
Here, we will use the concavity inequality twice to generate the desired bound.  Firstly, we calculate that
\begin{align*}
\left|\mu_R((a,b]\times(c,d]) \right| &= \half\left|(d-a)^{2H} + (c-b)^{2H} - (d-b)^{2H} - (c-a)^{2H}\right|\\
&\leq \half\left((d-a)^{2H} - (c-a)^{2H}\right)\\
&\qquad\qquad\quad + \half\left((d-b)^{2H} - (c-b)^{2H}\right)\\
&\leq (d-c)^{2H}.
\end{align*}
In much the same manner, we find that
\begin{align*}
\left|\mu_R((a,b]\times(c,d]) \right| &= \half\left|(d-a)^{2H} + (c-b)^{2H} - (d-b)^{2H} - (c-a)^{2H}\right|\\
&\leq \half\left((d-a)^{2H} - (d-b)^{2H}\right)\\
&\qquad\qquad\quad + \half\left((c-a)^{2H} - (c-b)^{2H}\right)\\
&\leq (b-a)^{2H}.
\end{align*}
\end{proof}
\begin{theorem}\label{t.Rvar}
Let $r := \frac{1}{2H} > 1$.  Then the function $R$ has finite two-dimensional $r$-variation over $[0,T]^2$; more specifically,
\[
\|R\|^{(2D)}_{r} \leq (5T)^{2H}.
\]
\end{theorem}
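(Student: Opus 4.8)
The plan is to estimate the raw sum $\sum_{i,j}\lvert\Delta_{ij}R\rvert^{r}$ over arbitrary partitions $\{s_i\}$, $\{t_j\}$ of $[0,T]$ and to show it never exceeds $5T$. Since $r=\tfrac{1}{2H}$ and $\Delta_{ij}R=\mu_R((s_{i-1},s_i]\times(t_{j-1},t_j])$, taking $r$-th roots then gives $\|R\|^{(2D)}_{r}\leq(5T)^{1/r}=(5T)^{2H}$. The naive route, bounding every term via Lemma~\ref{l.msrbound} as $\lvert\Delta_{ij}R\rvert^{r}\leq(s_i-s_{i-1})\wedge(t_j-t_{j-1})$ and summing, fails outright: for two uniform partitions into $N$ pieces it produces a quantity of order $NT$, which diverges. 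The cancellation coming from the negative correlation in Lemma~\ref{l.negativecorr} is therefore indispensable, and arranging its use correctly is the main obstacle.

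First I would fix $i$, write $I=(s_{i-1},s_i]$, and aim to bound the inner sum $\sum_j\lvert\mu_R(I\times(t_{j-1},t_j])\rvert^{r}$ by $5(s_i-s_{i-1})$; summing over $i$ then supplies the factor $T$. To do this I would split the $j$-intervals into three groups by their position relative to $I$: those lying entirely to the left of $I$, those overlapping $I$, and those lying entirely to the right.

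For the two disjoint groups I would exploit telescoping. Setting $\psi(t):=R(s_i,t)-R(s_{i-1},t)$, each right-disjoint increment is $\mu_R(I\times(t_{j-1},t_j])=\psi(t_j)-\psi(t_{j-1})$, and by Lemma~\ref{l.negativecorr} (invoking the symmetry of $R$ to orient the left-disjoint case) these all carry the same sign. Their absolute values therefore telescope to a single increment $\mu_R(I\times J^{*})$ over the union $J^{*}$ of those intervals, which Lemma~\ref{l.msrbound} controls by $(s_i-s_{i-1})^{2H}$. The superadditivity inequality recorded above, in its finite form $\sum_j a_j^{r}\leq\big(\sum_j a_j\big)^{r}$ valid for $r\geq1$, then transfers this $\ell^1$ bound to the $\ell^r$ sum:
\[
\sum_{j\text{ right-disjoint}}\lvert\mu_R(I\times(t_{j-1},t_j])\rvert^{r}
\leq\Big(\sum_{j}\lvert\mu_R(I\times(t_{j-1},t_j])\rvert\Big)^{r}
\leq\big((s_i-s_{i-1})^{2H}\big)^{r}=s_i-s_{i-1}.
\]
The left-disjoint group is identical, contributing a further $s_i-s_{i-1}$.

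For the overlapping group I would instead use Lemma~\ref{l.msrbound} termwise in the form $\lvert\mu_R(I\times J)\rvert^{r}\leq(s_i-s_{i-1})\wedge(t_j-t_{j-1})$. At most one interval straddles the endpoint $s_{i-1}$ and at most one straddles $s_i$, each bounded by $s_i-s_{i-1}$; the remaining overlapping intervals lie inside $I$, so their lengths $t_j-t_{j-1}$ sum to at most $s_i-s_{i-1}$ and each such term is bounded by its own length. The overlapping group thus contributes at most $3(s_i-s_{i-1})$, making the inner sum at most $5(s_i-s_{i-1})$ and closing the argument. The delicate point throughout is the bookkeeping of these three regimes, so that the signed telescoping is applied exactly to the disjoint increments (where Lemma~\ref{l.negativecorr} forces a common sign) while the unsigned estimate of Lemma~\ref{l.msrbound} handles the bounded number of straddling intervals and the interior intervals whose lengths already telescope geometrically inside $I$.
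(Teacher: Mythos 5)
Your proposal is correct and is essentially the paper's own argument: the paper likewise fixes one index, splits the other partition into the two straddling intervals, the interior (nested) intervals whose lengths telescope, and the two disjoint groups where Lemma~\ref{l.negativecorr} forces a common sign so that telescoping plus the superadditivity inequality $\sum_j a_j^{r}\leq\bigl(\sum_j a_j\bigr)^{r}$ and Lemma~\ref{l.msrbound} give the bound, arriving at the same $5(t_j-t_{j-1})$ per strip and hence $5T$ overall. The only difference is that you fix $i$ and sum over $j$ while the paper fixes $j$ and sums over $i$, which is immaterial by the symmetry of $R$.
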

\begin{proof}
Let 
\begin{align*}
\Pi &:= \{s_0 := 0 < s_1 < \ldots < s_M := T\},\\
\Psi &:= \{t_0 := 0 < t_1 < \ldots < t_N := T\}\\
\end{align*}
be two partitions of $[0,T]$.  Fix a $j \in \{1,\ldots,T\}$.  We will let $A$ be the unique integer such that $s_{A-1} \leq t_{j-1} < s_A$, and $L \geq A$ will denote the unique integer for which $s_{L-1} < t_j \leq s_L$.
\begin{figure}[ht]
\includegraphics[width=0.9\columnwidth]{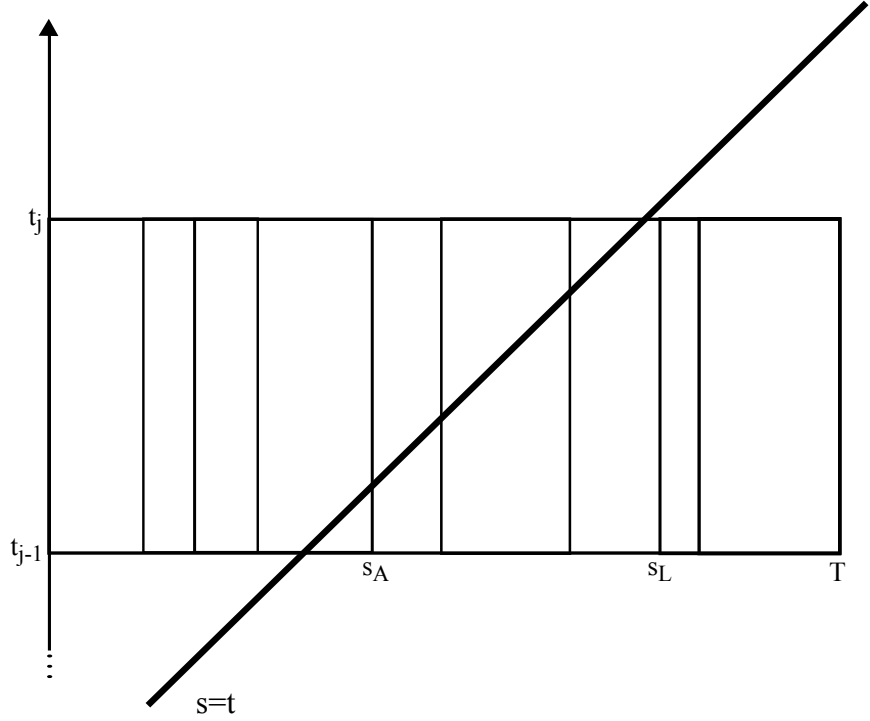}
\caption{An example partition in $s$ for a fixed strip $t_{j-1} < t \leq t_j$.}
\end{figure}
\pagebreak

As usual, we define
\[
\Delta_{ij}{R} := \mu_R\left((s_{i-1},s_i]\times(t_{j-1},t_j]\right).
\]
Then
\begin{align}\label{e.sum}
\sum\limits_{i=1}^M \left|\Delta_{ij}R\right|^{r} &\leq \sum\limits_{i=1}^{A-1} \left|\Delta_{ij}R\right|^{r} + \left|\Delta_{Aj}R\right|^{r}\\\nonumber
&\qquad + \sum\limits_{i=A+1}^{L-1} \left|\Delta_{ij}R\right|^{r} + \left|\Delta_{Lj}R\right|^{r} + \sum\limits_{i=L+1}^M \left|\Delta_{ij}R\right|^{r}.\\\nonumber
\end{align}
It follows from Lemma~\ref{l.msrbound} that
\begin{equation}\label{e.AL}
\left|\Delta_{Aj}R\right|^{r} \leq (t_j-t_{j-1}),\qquad\left|\Delta_{Lj}R\right|^{r} \leq (t_j-t_{j-1}).
\end{equation}
Lemma~\ref{l.msrbound} also implies that $\left|\Delta_{ij}R\right|^{r} \leq (s_i - s_{i-1})$; hence, we may use telescoping to bound the third term:
\begin{equation}\label{e.telescope}
\sum\limits_{i=A+1}^{L-1} \left|\Delta_{ij}R\right|^{r} \leq \sum\limits_{i=A+1}^{L-1} (s_i - s_{i-1}) = (s_{L-1} - s_A) \leq (t_j - t_{j-1}).
\end{equation}
Let us now focus on the first and last terms of Equation~\eqref{e.sum}.  Note that on each of these sums, Lemma~\ref{l.negativecorr} implies that $\Delta_{ij}R < 0$.  We may use this fact along with Lemma~\ref{l.msrbound} to see that
\begin{align}\nonumber
\sum\limits_{i=1}^{A-1} \left|\Delta_{ij}R\right|^{r} &+ \sum\limits_{i=L+1}^M \left|\Delta_{ij}R\right|^{r}\\\nonumber
&\leq 
\left(\sum\limits_{i=1}^{A-1} \left|\Delta_{ij}R\right|\right)^{r} + \left(\sum\limits_{i=L+1}^M \left|\Delta_{ij}R\right|\right)^{r}\\\nonumber
&\quad = \left|\sum\limits_{i=1}^{A-1} \Delta_{ij}R\right|^{r} + \left|\sum\limits_{i=L+1}^M \Delta_{ij}R\right|^{r}\\\nonumber
&\quad = \left|\mu_R\left((0,s_{A-1}]\times(t_{j-1},t_j]\right)\right|^{r}\\\nonumber
 &\qquad\quad+ \left|\mu_R\left((s_{L},T]\times(t_{j-1},t_j]\right)\right|^{r}\\\label{e.ends}
&\quad\leq 2(t_j - t_{j-1}).\\\nonumber
\end{align}
Combining Equations~\eqref{e.sum}--\eqref{e.ends} allows us to conclude that
\[
\sum\limits_{i=1}^M \left|\Delta_{ij}R\right|^{r} \leq 5(t_j - t_{j-1}).\\
\]
Hence,
\[
\sum\limits_{j=1}^N \sum\limits_{i=1}^M \left|\Delta_{ij}R\right|^{r} \leq \sum\limits_{j=1}^N 5(t_j - t_{j-1}) = 5T.
\]
This completes the proof, since the two-dimensional $r$-variation of $R$ is given as
\[
\|R\|^{(2D)}_{r} =  \left(\sup\limits_{\Pi,\Psi \in \mathcal{P}[0,T]}\sum\limits_\Pi\sum\limits_\Psi\left|\Delta_{ij}R\right|^{r}\right)^{\frac{1}{r}} \leq (5T)^{2H}.
\]
\end{proof}

\subsection{Restriction of Gaussian Measures}
At first blush, it may seem natural to have our process $B$ have the classical Wiener space $\wiener^2 := \mathcal{C}([0,T],\reals^2)$ as its sample space.  However, doing so is not ideal, since many of the operators we will be considering are only defined on smaller spaces, such as the $p$-variation spaces.  

We begin with a general result regarding $\sigma$-algebras.
\begin{lemma}\label{sigmasubalg}
\label{l.eq}Let $X$ be any real separable Banach space and $\mathcal{L}$ be any
Then $\left\Vert \cdot
\right\Vert _{X}$ is $\sigma\left(  \mathcal{L}\right)  $ -- measurable if and only if $\mathcal{B}_{X}=\sigma\left(  \mathcal{L}\right)  .$
\end{lemma}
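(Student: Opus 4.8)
The plan is to read $\mathcal{L}$ as a family of bounded linear functionals on $X$ (so $\mathcal{L}\subseteq X^{*}$), and to prove the two implications separately. Since each $f\in\mathcal{L}$ is continuous, hence Borel, the inclusion $\sigma(\mathcal{L})\subseteq\mathcal{B}_X$ holds regardless of any hypothesis. One direction of the lemma is then immediate: if $\sigma(\mathcal{L})=\mathcal{B}_X$, then $\|\cdot\|_X$, being norm-continuous, is $\mathcal{B}_X$-measurable and therefore $\sigma(\mathcal{L})$-measurable.

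For the other direction, assume $\|\cdot\|_X$ is $\sigma(\mathcal{L})$-measurable; since $\sigma(\mathcal{L})\subseteq\mathcal{B}_X$ always holds, it suffices to establish $\mathcal{B}_X\subseteq\sigma(\mathcal{L})$. First I would invoke the Doob--Dynkin lemma to produce a countable subfamily $\{f_n\}_{n\geq1}\subseteq\mathcal{L}$ and a Borel map $\Psi\colon\reals^{\mathbb{N}}\to\reals$ with $\|x\|_X=\Psi\big((f_n(x))_{n\geq1}\big)$ for all $x\in X$, using that any $\sigma(\mathcal{L})$-measurable function factors through countably many of the generators. The decisive point, which I expect to be the main obstacle, is that although $\sigma(\mathcal{L})$-measurability is in general destroyed by translation, here the generators are \emph{linear}, so translating the argument merely shifts the coordinates: for any fixed $x_0\in X$,
$\|x-x_0\|_X=\Psi\big((f_n(x)-f_n(x_0))_{n}\big)=\Psi_{x_0}\big((f_n(x))_n\big)$,
where $\Psi_{x_0}$ is $\Psi$ precomposed with the Borel translation by the constant sequence $(f_n(x_0))_n$. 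Hence $x\mapsto\|x-x_0\|_X$ is $\sigma(\{f_n\})$-measurable, and so every closed ball $\{x:\|x-x_0\|_X\leq\rho\}$ belongs to $\sigma(\mathcal{L})$.

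To finish I would use separability of $X$: fixing a countable dense set $\{x_k\}\subseteq X$, every open subset of $X$ is a countable union of open balls $B(x_k,q)$ with $q\in\mathbb{Q}_{>0}$, and each open ball is in turn a countable union of closed balls $\{x:\|x-x_k\|_X\leq q-1/m\}$. By the previous paragraph all of these lie in $\sigma(\mathcal{L})$, so every open set does, giving $\mathcal{B}_X\subseteq\sigma(\mathcal{L})$ and hence equality. The one genuinely delicate step is the affine-translation identity: it is precisely what lets the measurability of the single function $\|\cdot\|_X$ propagate to all of its translates $\|\cdot-x_0\|_X$, and thereby to every ball, while everything else is routine measure theory together with separability. (Alternatively, one could first observe that measurability of the norm forces $\mathcal{L}$ to separate points -- otherwise the norm would be constant along a line through the origin -- then extract a countable separating subfamily by a Lindel\"of covering argument and appeal to the Lusin--Souslin theorem; but the factorization route above is more self-contained.)
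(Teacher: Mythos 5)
Your proposal is correct and takes essentially the same approach as the paper: in both arguments the crux is that linearity of the functionals in $\mathcal{L}$ forces each translate $x \mapsto \left\Vert x - x_{0}\right\Vert _{X}$ to remain $\sigma\left(\mathcal{L}\right)$--measurable, hence all balls lie in $\sigma\left(\mathcal{L}\right)$, and separability of $X$ then gives $\mathcal{B}_{X}\subset\sigma\left(\mathcal{L}\right)$. The only difference is one of implementation: where you route the key step through a countable reduction and a Doob--Dynkin factorization followed by a coordinate shift, the paper gets it directly by noting that the translation map $x\mapsto x-x_{0}$ is $\sigma\left(\mathcal{L}\right)/\sigma\left(\mathcal{L}\right)$--measurable and composing it with the norm.
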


\begin{proof}
It it easy to see that, in any case, $\sigma(\mathcal{L}%
)\subset\mathcal{B}_X.$  Also, since $\left\Vert \cdot\right\Vert _{X}$ is
continuous it is always Borel measurable; therefore, if $\mathcal{B}_{X}%
=\sigma\left(  \mathcal{L}\right)  $ then $\left\Vert \cdot\right\Vert _{X}$
is clearly $\sigma\left(  \mathcal{L}\right)  $ -- measurable.

Suppose that $\left\Vert \cdot\right\Vert _{X}$ is $\sigma\left(
\mathcal{L}\right)  $ -- measurable; then for each $x_{0}\in\sigma\left(  \mathcal{L}\right)$,  $\left\Vert \cdot
-x_{0}\right\Vert_X $ is also $\sigma\left(
\mathcal{L}\right)  $ -- measurable, and
$x\rightarrow x-x_{0}$ is $\sigma\left(  \mathcal{L}\right)  /\sigma\left(
\mathcal{L}\right)  $ -- measurable. From this observation, it follows that
$\sigma\left(  \mathcal{L}\right)  $ contains all balls in $X.$ Since $X$ is
separable, every open subset of $X$ may be written as a countable union of open
balls.  It follows, then, that $\sigma\left(  \mathcal{L}\right)  $
contains all open subsets of $X$ and therefore that $\mathcal{B}_{X}\subset
\sigma(\mathcal{L}).$
\end{proof}

\begin{theorem}\label{gaussianrestriction}
Suppose $(X,\mathcal{B} = \mathcal{B}_X,\mu)$ is a Gaussian probability space, and $\tilde{X}$ is a linear subspace of $X$.  Also let $\|\cdot\|_{\tilde{X}}$ is a norm on $\tilde{X}$ such that
\begin{enumerate}
\item The space $(\tilde{X},\|\cdot\|_{\tilde{X}})$ is a separable Banach space,
\item The embedding of $\tilde{X}$ into $X$ is continuous,
\item $\tilde{X} \in \mathcal{B}$ and $\mu(\tilde{X}) = 1$,
\item $\tilde{\mathcal{B}} := \mathcal{B}_{\tilde{X}} = \{A \cap \tilde{X}:A \in \mathcal{B}\}$.
\end{enumerate}
Then $\tilde{\mu} := \mu|_{\tilde{X}}$ is a Gaussian measure and $(\tilde{X},\tilde{\mathcal{B}},\tilde{\mu})$ is a Gaussian probability space.
Furthermore, $(X,\mu)$ and $(\tX,\tmu)$ share the same Cameron-Martin space $\rkhs$.
\end{theorem}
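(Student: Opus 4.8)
The plan is to argue in four stages: confirm that $\tmu$ is a genuine probability measure, record an isometric identification of the two $L^2$-spaces, verify the Gaussian property, and finally match the Cameron--Martin data.

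First I would check that $\tmu$ is well defined. Hypothesis (3) gives $\tX \in \borel$, so for every $A \in \borel$ the set $A \cap \tX$ again lies in $\borel$; hence, by hypothesis (4), $\tilde{\borel} \subset \borel$ and $\tmu := \mu|_{\tilde{\borel}}$ is nothing more than the restriction of $\mu$ to a sub-$\sigma$-algebra, with $\tmu(\tX) = \mu(\tX) = 1$. Since $\tX$ carries full $\mu$-measure, the restriction map $f \mapsto f|_{\tX}$ is an isometric isomorphism $L^2(\mu) \to L^2(\tmu)$, and I will silently identify the two spaces through it. Under this identification each $\psi \in X^*$ is sent to $\iota^*\psi := \psi \circ \iota \in \tX^*$, where $\iota : \tX \hookrightarrow X$ denotes the inclusion, which is continuous by hypothesis (2).

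The heart of the argument is to show that $\tmu$ is Gaussian, i.e. that every $\varphi \in \tX^*$ is centered Gaussian under $\tmu$; the form $\tilde q$ is then obtained by polarizing $\varphi \mapsto \int_{\tX}\varphi^2\,d\tmu$, and the required formula for the characteristic functional is immediate. The obstacle is that a functional continuous for $\|\cdot\|_{\tX}$ need not extend to an element of $X^*$, so its law cannot be read off directly from that of $\mu$. I would circumvent this by regarding $\varphi$ as a $\mu$-measurable linear functional: it is defined and linear on the full-measure linear subspace $\tX$, and, being continuous, it is $\tilde{\borel}$-measurable, hence $\borel$-measurable once extended by zero off $\tX$. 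The structure theory of Gaussian measures (see \cite{bogachev}) then forces any such measurable linear functional to lie in the first chaos $\overline{X^*}^{L^2(\mu)}$ and to be centered Gaussian (in particular square-integrable); transporting this back through the $L^2$-identification shows that $\varphi$ is centered Gaussian under $\tmu$. I expect this to be the main difficulty, since it is precisely the point at which a merely $\tX$-continuous functional must be brought under the control of $\mu$. Once $\tmu$ is known to be Gaussian, Fernique's theorem (Theorem 2.6 of \cite{daprato}), applied to $\tmu$ on the separable Banach space $\tX$, yields $\int_{\tX}\|\omega\|_{\tX}^2\,d\tmu < \infty$, a fact I will need below.

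For the Cameron--Martin spaces I would first identify the two first-chaos spaces. On one hand $\iota^*(X^*) \subset \tX^*$, so under the identification $\overline{X^*}^{L^2(\mu)} \subset \overline{\tX^*}^{L^2(\tmu)}$; on the other hand the measurable-functional argument above gives $\tX^* \subset \overline{X^*}^{L^2(\mu)}$, whence the reverse inclusion. Thus the two first-chaos spaces coincide, and their inner products agree because both are inherited from the common $L^2$. It then remains to see that the Paley--Wiener maps agree, i.e. $\iota \circ J_{\tX} = J_X$ on this common space. For $f$ in the first chaos the integrand $\omega \mapsto \omega\,f(\omega)$ is Bochner integrable in $\tX$, since by Cauchy--Schwarz $\int_{\tX}\|\omega\|_{\tX}\,|f(\omega)|\,d\tmu \leq \big(\int_{\tX}\|\omega\|_{\tX}^2\,d\tmu\big)^{1/2}\big(\int_{\tX}f^2\,d\tmu\big)^{1/2} < \infty$, the first factor being finite by Fernique; as $\iota$ is continuous and linear it commutes with the Bochner integral, giving $\iota(J_{\tX}f) = J_X f$. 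Consequently the images $\rkhs = J_{\tX}(\overline{\tX^*}^{L^2(\tmu)})$ and $J_X(\overline{X^*}^{L^2(\mu)})$ coincide as subsets of $X$, and the identity $\langle Jf, Jg\rangle_{\rkhs} = \langle f,g\rangle_{L^2}$, valid on both sides, shows the Hilbert structures agree. This delivers the asserted equality of Cameron--Martin spaces and finishes the plan.
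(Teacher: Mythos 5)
Your proposal is correct, but it reaches the Gaussianity of $\tmu$ by a genuinely different route than the paper. The paper never touches $\tX^*$ at all: it invokes the rotation-invariance characterization of Gaussian measures (Theorem 3.1.1 of \cite{bryc}), observes that this criterion involves only integrals of bounded measurable functions against the product measure, and transfers the invariance from $X\times X$ to $\tX\times\tX$ using $\mu(\tX)=1$ --- thereby sidestepping entirely the obstacle you correctly single out, namely that an element of $\tX^*$ need not extend to an element of $X^*$. You instead attack that obstacle head-on: you extend each $\varphi\in\tX^*$ by zero off $\tX$ and invoke the structure theory of $\mu$-measurable, almost-everywhere-linear functionals from \cite{bogachev} to place it in $\overline{X^*}^{L^2(\mu)}$, hence conclude it is centered Gaussian. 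This is a heavier tool than Bryc's characterization, but it is applied legitimately (the zero-extension is indeed measurable and linear on a full-measure linear subspace, and a centered Gaussian measure on a separable Banach space is Radon, so the theorem applies), and it buys you something the paper's argument does not: the equality of first chaoses $\overline{\tX^*}^{L^2(\tmu)}=\overline{X^*}^{L^2(\mu)}$. That identity makes your Cameron-Martin argument sharper than the paper's, which disposes of this half in two lines (extend elements of $L^2(\tX,\tmu)$ by zero and claim $J(L^2(\tX,\tmu))=J(L^2(X,\mu))=\rkhs$) while silently ignoring the distinction between $J_{\tX}$, a Bochner integral valued in $\tX$, and $J_X$; your use of Fernique's theorem on $(\tX,\tmu)$ together with the fact that the continuous inclusion $\iota$ commutes with Bochner integrals fills exactly this gap. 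In short, the paper's proof is the more elementary and self-contained one given \cite{bryc}; yours requires deeper input from \cite{bogachev} but confronts the dual-space issue directly and renders the Cameron-Martin comparison fully rigorous.
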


\begin{proof}
Let $R_{\pi /4}:X \times X \rightarrow X\times X$ is the rotation map defined by
\[
R_{\pi/4}(x,y) = \left(\frac{\sqrt{2}}{2}(x-y),\frac{\sqrt{2}}{2}(x+y)\right);
\]
then by the rotational invariance of Gaussian measures (see, for example, Theorem 3.1.1 of \cite{bryc}), proving the statement that $\tilde{\mu}$ is Gaussian is equivalent to proving that
\[
\int_{\tilde{X}\times{\tilde{X}}} f(x,y)\;d\tmu(x)d\tmu(y) = \int_{\tX \times \tX}f\circ R_{\pi/4} (x,y)\;d\tmu(x)d\tmu(y)
\]
for any bounded $\tilde{\mathcal{B}}\times\tilde{\mathcal{B}}$-measurable function $f$.  Let $f$ be such a function; since $\tX$ is of full $\mu$ measure, we may extend $f$ to an $\borel \times \borel$-measurable function (which we shall also refer to as $f$) such that $\int_{\tX \times \tX} f d\mu d\mu = \int_{X \times X} f d\mu d\mu$ (this extension may be done by setting a function equal to $f$ on $\tX \times \tX$ and equal to zero on the complement, for example).  Then it follows that
\begin{align*}
\int_{\tilde{X}\times{\tilde{X}}} f(x,y)\;d\tmu(x)d\tmu(y) &= \int_{\tilde{X}\times{\tilde{X}}} f(x,y)\;d\mu(x)d\mu(y)\\
					&= \int_{X\times X} f(x,y)\;d\mu(x)d\mu(y)\\
					&= \int_{X\times X} f\circ R_{\pi/4}(x,y)\;d\mu(x)d\mu(y)\\
					&= \int_{\tX\times \tX} f\circ R_{\pi/4}(x,y)\;d\mu(x)d\mu(y)\\
					&= \int_{\tX\times \tX} f\circ R_{\pi/4}(x,y)\;d\tmu(x)d\tmu(y).\\
\end{align*}
This proves the first assertion.

To see the equivalence of Cameron-Martin spaces, we recall that $J: L^2(X,\mu) \rightarrow X$, defined by
\[
Jf := \int_X xf(x)\;d\mu(x),
\]
maps onto $\rkhs$.  Again, by virtue of $\mu$ being fully supported on $\tX$, we may extend any element of $L^2(\tX,\tmu)$ to an element of $L^2(X,\mu)$; thus it is easy to see that $J(L^2(\tX,\tmu)) = J(L^2(X,\mu)) = \rkhs$, as desired.
\end{proof}
\begin{remark} An alternate proof of the equivalence of Cameron-Martin spaces may be found in Proposition 2.8 of \cite{daprato}.
\end{remark}

Let us now focus on restricting the law of fractional Brownian motion with Hurst parameter $1/3 < H < 1/2$ to a variational space.  The standard Gaussian space on which fBm is realized is $(\wiener, \borel, \prob)$, where $\wiener = \{\omega \in \mathcal{C}([0,T],\reals): \omega(0) = 0\}$ and $\prob = \text{Law}(B^H)$.  Pick $0 < \epsilon << 1$ and fix $p:= 1/H + \epsilon$.  
Let $\phi_t, 0\leq t \leq T$ denote the evaluation map on $\wiener$; i.e., $\phi_t(x) = x(t)$ for any $x \in \wiener$.  
Since
\[
\|\cdot\|_\wiener = \sup\limits_{0\leq t \leq T} \phi_t,
\]
it follows that $\|\cdot\|_\wiener$ is a $\sigma(\{\phi_t:0\leq t \leq T\})$-measurable function, and by Lemma~\ref{sigmasubalg}, it then follows that $\sigma(\{\phi_t:0\leq t \leq T\}) = \borel_\wiener$.  Recall that we have defined the $p$-variation norm on $\wiener$ by
\[
\|x\|_p = \sup\limits_{\Pi\in\mathcal{P}[0,T]} \left(\sum\limits_{i=1}^{(\#\Pi)}|\Delta_{i}x|%
^{p}\right)^{\frac{1}{p}}.
\]
Recall that we have defined the space
\begin{align*}
\wiener_p &= \overline{\{x \in \mathcal{C}_\infty([0,T],\reals):x(0)=0\}}^{\|\cdot\|_p}.\\
\end{align*}
By Corollary 5.35 and Proposition 5.38 of \cite{friz_victoir}, this space is a separable Banach space under the $p$-variation norm and contains all $q$-variation paths starting at zero for any $1 \leq q < p$.  
Note that for $x \in \wiener_p$, H\"{o}lder's inequality implies that for any $t \in [0,T]$,
\begin{align*}
|x(t)| &= |x(t) - x(0)|\\
					&\leq |x(t) - x(0)|+|x(T)-x(0)|\\
					&\leq 2^{\frac{p-1}{p}}\left(|x(t) - x(0)|^p+|x(T)-x(0)|^p\right)^{\frac{1}{p}}\\
					&\leq 2^{\frac{p-1}{p}}\|x\|_p,\\
\end{align*}
from which it follows that $\|x\|_\wiener \leq \|x\|_p$, and so the embedding of $\wiener_0^p$ into $\wiener$ is continuous.	
Observe that we may rewrite the $p$-variation norm as
\[
\|\cdot\|_p = \sup\limits_{\Pi\in\mathcal{P}[0,T]} \left(\sum\limits_{i=1}^{(\#\Pi)}|\phi_{t_i} - \phi_{t_{i-1}}|%
^{p}\right)^{\frac{1}{p}}.
\]
Thus, $\|\cdot\|_p$ is $\sigma(\{\phi_t|_{\wiener_p}: 0 \leq t \leq T\})$-measurable, which implies that $\sigma(\mathcal{L}) = \borel_{\wiener_p}$.
Furthermore, by Theorem 5.33 of \cite{friz_victoir}, we know that the space $\wiener_p$ is equivalent to
\[
\left\{x\in\wiener_p: \lim\limits_{\delta\rightarrow 0}\sup\limits_{\Pi\in\mathcal{P}[0,T]:|\Pi|<\delta}\sum\limits_{i=1}^{\#(\Pi)}|x(t_i)-x(t_{i-1})|^p = 0 \right\}
\]
If we now define
\[
\alpha_p(x) := \lim\limits_{n\rightarrow \infty}\sup\limits_{\Pi\in\mathcal{P}[0,T]\cap\mathbb{N}:|\Pi|<\frac{1}{n}}\sum\limits_{i=1}^{\#(\Pi)}|x(t_i)-x(t_{i-1})|^p,
\]
then it follows that $\alpha_p$ is a $\sigma(\{\phi_t|_{\wiener_p}:0\leq t \leq T\})$-measurable function, and that  
\[
\wiener_p = \wiener_p \cap \{\alpha_p = 0\} \in \borel_\wiener.
\]
Additionally, we may now use Lemma~\ref{sigmasubalg} to conclude that
\begin{align*}
\borel_{\wiener_p} &= \sigma(\{\phi_t|_{\wiener_p}:0\leq t \leq T\})\\
											& = \{A\cap \wiener_p:A \in \sigma(\{\phi_t|_{\wiener}:0\leq t \leq T\}) \}\\
											& = \{A\cap \wiener_p:A \in \borel_\wiener \}.\\
\end{align*}
Finally, we note that since the paths $t \mapsto B_t^H$ are a.s. H\"older continuous of order $\beta := H\left(1+\frac{\epsilon H}{2}\right)^{-1} < H$, each such path has finite $q$-variation for $q = \frac{1}{\beta} = \frac{1}{H} + \frac{\epsilon}{2}$.  So by Corollary 5.35 of \cite{friz_victoir}, $\prob(\wiener_p) \geq \prob(\wiener_q) = 1$.  Thus, we may appeal to Theorem~\ref{gaussianrestriction} to conclude that $(\wiener_p, \borel_{\wiener_p}, \prob|_{\wiener_p})$ is also a Gaussian probability space, and that the associated Cameron-Martin space $\rkhs$ coincides with the usual Cameron-Martin space corresponding to $\prob$ on $\wiener$.

\bibliographystyle{plain}
\bibliography{references}

\end{document}